\newtheorem{theoremm}{Theorem}
\newtheorem{lma}{Lemma}
\newcommand{\E}{\operatorname{E}}
\newcommand{\pr}{\operatorname{Pr}}
\newcommand{\var}{\operatorname{Var}}
\newcommand{\diag}{\operatorname{diag}}
\newcommand{\ba}{\mathbf{A}}
\newcommand{\bb}{\mathbf{B}}
\newcommand{\bc}{\mathbf{C}}
\newcommand{\bd}{\mathbf{D}}
\newcommand{\bh}{\mathbf{H}}
\newcommand{\bk}{\mathbf{K}}
\newcommand{\bp}{\mathbf{P}}
\newcommand{\bs}{\mathbf{S}}
\newcommand{\bw}{\mathbf{W}}
\newcommand{\by}{\mathbf{y}}
\newcommand{\sa}{\mathbf{a}}
\newcommand{\ssc}{\mathbf{c}} % %DIFFERENT!!!!!!!!
\newcommand{\sd}{\mathbf{d}}
\newcommand{\ssf}{\mathbf{f}}
\newcommand{\ssl}{\mathbf{l}} % %DIFFERENT!!!!!!!!
\newcommand{\st}{\mathbf{t}}
\newcommand{\sw}{\mathbf{w}}
\newcommand{\sx}{\mathbf{x}}
\newcommand{\sz}{\mathbf{z}}
\newcommand{\btheta}{\boldsymbol{\theta}}
\newcommand{\bomega}{\boldsymbol{\omega}}
\newcommand{\blambda}{\boldsymbol{\lambda}}
\newcommand{\bDelta}{\boldsymbol{\Delta}}
\newcommand{\bbeta}{\boldsymbol{\beta}}
\newcommand{\bmu}{\boldsymbol{\mu}}
\newcommand{\bUpsilon}{\boldsymbol{\Upsilon}}
\newcommand{\bSigma}{\boldsymbol{\Sigma}}
\newcommand{\bOmega}{\boldsymbol{\Omega}}
\newcommand{\bzero}{\boldsymbol{0}}
\newcommand{\bone}{\boldsymbol{1}}
\newcommand{\bxi}{\boldsymbol{\xi}}
\newcommand{\bpsi}{\boldsymbol{\psi}}
\newcommand{\hbomega}{\hat{\bomega}}
\newcommand{\sxij}{\sx_{ij}}
\newcommand{\sxbisw}{\bar{\sx}_{i}^{(w)T}}
\newcommand{\psibb}{\psi^{(b)}}
\newcommand{\psiww}{\psi^{(w)}}
\newcommand{\bpsibb}{\bpsi^{(b)}}
\newcommand{\bpsiww}{\bpsi^{(w)}}
\newcommand{\bxibb}{\boldsymbol{\xi}^{(b)}}
\newcommand{\bxiww}{\boldsymbol{\xi}^{(w)}}
\newcommand{\dbeta}{\dot{\beta}}
\newcommand{\sige}{\sigma_e}
\newcommand{\siga}{\sigma_{\alpha}}
\newcommand{\dsiga}{\dot{\sigma}_{\alpha}}
\newcommand{\dsige}{\dot{\sigma}_e}
\newcommand{\yij}{y_{ij}}
\newcommand{\eij}{e_{ij}}
\newcommand{\yib}{\bar{y}_{i}}
\newcommand{\swy}{S_{w}^{y}}
\newcommand{\sumig}{\sum_{i=1}^g}
\newcommand{\sumjmi}{\sum_{j=1}^{m_i}}
\newcommand{\bkmh}{\bk^{-1/2}}
\newcommand{\bkh}{\bk^{1/2}}
\begin{document}

\title{Increasing Cluster Size Asymptotics for Nested Error Regression Models}
%\tnotetext[mytitlenote]{Fully documented templates are available in the elsarticle package on \href{http://www.ctan.org/tex-archive/macros/latex/contrib/elsarticle}{CTAN}.}

\author{
\normalfont 								\normalsize
    Ziyang Lyu and A.H. Welsh \\
    \normalfont 								\normalsize
   Mathematical Sciences Institute and Research School of Finance, Actuarial Studies and Statistics \\
    \normalfont 								\normalsize
        Australian National University}

\date{today}

\maketitle

\abstract{
This paper establishes asymptotic results for the maximum likelihood and restricted maximum likelihood (REML) estimators of the parameters in the nested  error regression model for clustered data when both of the number of independent clusters and the cluster sizes (the number of observations in each cluster) go to infinity.  Under very mild conditions, the estimators are shown to be asymptotically normal with an elegantly structured covariance matrix.  There are no restrictions on the rate at which the cluster size tends to infinity but it turns out that we need to treat within cluster parameters (i.e. coefficients of unit-level covariates that vary within clusters and the within cluster variance) differently from between cluster parameters (i.e. coefficients of cluster-level covariates that are constant within clusters and the between cluster variance) because they require different normalisations and are asymptotically independent.
}

\bigskip \noindent
\textit{Key words}: 
asymptotic independence; maximum likelihood estimator; mixed model; REML estimator; variance components.

%\MSC[2010] 62E20,  62J05

\thispagestyle{empty}

\section{Introduction}

Regression models with nested errors (also called random intercept or homogeneous correlation models) are widely used in applied statistics to model relationships in clustered data; they were introduced for survey data, by \cite{scott1982effect} and \cite{battese1988error}, and, for longitudinal data, by \cite{laird1982random}.  The models are usually fitted (see \cite{harville1977maximum}) by assuming normality and computing maximum likelihood or restricted maximum likelihood (REML) estimators.  As these estimators are nonlinear, asymptotic results provide an important way to understand their properties and then to construct approximate inferences about the unknown parameters.  The usual asymptotic results applied to these estimators from \cite{hartley1967maximum}, \cite{anderson1969statistical}, \cite{miller1977asymptotic}, \cite{das1979asymptotic}, \cite{cressie1993asymptotic}, and \cite{richardson1994asymptotic} increase the number of clusters while keeping the size of each cluster fixed or bounded.  However, there are many applications, particularly with survey data, with large cluster sizes; for example, \cite{arora1997} give an example with $43$ clusters and cluster sizes ranging from $95$ to $633$ and such examples are common in analysing poverty data \citep{pratesi2016poverty}.  In addition, there are theoretical problems (e.g. in prediction, see \cite{jiang1998asymptotic}) for which both the number of clusters and the cluster sizes need to increase.  Therefore, in this paper, we study the asymptotic properties of normal-theory maximum likelihood and REML estimators of the parameters in the nested error regression model as both the number of clusters and the cluster sizes tend to infinity.

Suppose that we observe on the $j$th unit in the $i$th cluster the vector  $[y_{ij}, \sx_{ij}^{T}]^T$, where $y_{ij}$ is a scalar response variable and $\sx_{ij}$ is a vector of explanatory variables or covariates, $j=1,\ldots,m_i$, $i=1,\ldots, g$. The nested error regression model specifies that
\begin{equation}\label{nerm}
\yij= \beta_0+ \sx_{ij}^T\bbeta_s +\alpha_i+\eij, \qquad j=1,\ldots,m_i, \, i=1,\ldots,g,
\end{equation}
where $\beta_0$ is the intercept, $\bbeta_s$ is the slope parameter,  $\alpha_i$ is a random effect representing a random cluster effect and $e_{ij}$ is an error term.  We assume that the $\{\alpha_i\}$ and $\{e_{ij}\}$ are all mutually independent with mean zero and variances (called the variance components) $\siga^2$ and  $\sige^2$, respectively; we do not assume normality.   This regression model treats clusters as independent with constant (i.e. homogeneous) correlation within clusters.  It is a particular, simple linear mixed model that is widely used in fields such as small area estimation (see \cite{rao2014small})  to model and make predictions from clustered data, so our results are immediately useful.  In addition, its simplicity allows us to use elementary methods to gain insight into exactly what is going on and obtain explicit, highly interpretable results as the cluster sizes increase.  These arguments and results form the basis for how to proceed to more complicated cases, with multiple variance components.

When the random effects and errors are normally distributed, the likelihood for the parameters and the REML criterion can be obtained analytically.  Irrespective of whether normality holds or not, we refer to these functions as the likelihood and the REML criterion for the model (\ref{nerm}) and the values of the parameters that maximise them as maximum likelihood and REML estimators, respectively.  For our results, we make very simple assumptions: essentially finite ``$4+\delta$'' moments for the random effects and errors (instead of normality) and, allowing the explanatory variables to be fixed or random, conditions analogous to finite ``$2+\delta$'' moments for the explanatory variables. We allow $g \to \infty$ and $\min_{1\le i \le g} m_i \to \infty$ without any restriction on the rates.  We obtain asymptotic representations for both the maximum likelihood and REML estimators that give the influence functions of these estimators, are very useful for deriving results when we combine these estimators with other estimators, and lead to central limit theorems for these estimators and asymptotic inferences for the unknown parameters.   The normalisation is by a diagonal matrix which is easy to interpret.  These results provide new and striking insights. First, we need to separate and treat within cluster parameters (i.e. coefficients of unit-level covariates that vary within clusters and the within cluster variance $\sigma_e^2$) differently from between cluster parameters (i.e. coefficients of cluster-level covariates that are constant within clusters and the between cluster variance $\sigma_{\alpha}^2$).   We make explicit the fact that the information for within cluster parameters grows with $n = \sum_{i=1}^g m_i$ and the information for between cluster parameters grows with $g$ so they require different normalisations.  The asymptotic variance matrix which we obtain explicitly has a very tidy and easy to interpret block diagonal structure.  Second, there are good reasons for centering the within cluster covariates about their cluster means and then including the cluster means as contextual effect variables in the between cluster covariates (see for example \cite{yoon2020effect} for references) but our asymptotic results (which include both cases) show that increasing cluster size has asymptotically the same effect as the centering (although without increasing the number of between cluster parameters) and also asymptotically orthogonalises the variance components. These apparently simple insights are new and not available from the existing literature.  

The few results in the literature that allow both the number of clusters and the cluster size to go to infinity do not give the same insights as our results.  \cite{jiang1996reml} proved consistency and asymptotic normality of the maximum likelihood and REML estimators for a wide class of linear mixed models allowing increasing cluster sizes.  He later showed this condition is required for studying the empirical distribution of the empirical best linear unbiased predictors (EBLUPs) of the random effects \citep{jiang1998asymptotic}.  % \cite{jiang1996reml} went beyond the scope of the present paper by also allowing the number of covariates to increase with the sample size, a case in which the asympto.     
\cite{xie2003asymptotics} obtained results for generalized estimating equation regression parameter estimators with increasing cluster size which potentially relate to our estimators, but their estimators do not include the variance components so the results do not apply to our estimators.  The difficulties with trying to apply general results to particular models like (\ref{nerm}) are that it can be difficult to understand the conditions and interpret the main result.   To illustrate, increasing cluster size in \cite{jiang1996reml} is a part of other complicated assumptions and, for particular examples, he needed further conditions on the way the cluster size increases, making it difficult to see whether there is any restriction on the relationship between the cluster size and the number of clusters and leaving open questions of whether the conditions are minimal or not.  Also, although Jiang did give some nested model examples which satisfy his main invariant class $AI^4$ condition, this condition is quite complicated. In terms of their main results, both \cite{jiang1996reml} and \cite{xie2003asymptotics} normalise the estimators by the product of general (nondiagonal) matrices, producing results which are difficult to interpret and do not provide the insights our results provide.

We introduce notation to describe the maximum likelihood and REML estimators for the parameters in  (\ref{nerm})-(\ref{mean}), specify the conditions and state our main results in Section \ref{sec:not}.  We discuss the results in Section \ref {sec:disc} and give the proofs in Section  \ref{sec:proof}. 

%%%%%%%%%%%%%%%%%%%%%%%%%%%%%%%%%%%%%%%%%%%%%%%%%%%%%

\section{Results} \label{sec:not}

We gain important insights by partitioning the vector of covariates $\sx_{ij}$ into the $p_w$-vector $\sx_{ij}^{(w)}$ of within cluster covariates and the $p_b$-vector $\sx_{i}^{(b)}$ of between cluster covariates.  As noted in the Introduction, it is also often useful to center the within cluster covariates about their cluster means and then expand the between cluster covariate vector to include the cluster means of the within cluster covariates.    Specifically, for a single within cluster covariate $x_{ij}$, we can make the regression function either $\beta_0+ x_{ij}\beta_2$ or the centered form $\beta_0+ \bar{x}_i\beta_1 + (x_{ij}- \bar{x}_i)\beta_2$.   This centering ensures that $\sum_{j=1}^{m_i} \sx_{ij}^{(w)} = \bzero_{[p_w:1]}$ for all $i=1,\ldots, g$, where $\bzero_{[p:q]}$ denotes the $p \times q$ matrix of zeros, and as it orthogonalises the between and within covariates, has advantages for interpreting and fitting the model \citep{yoon2020effect} as well as increasing flexibility.   We leave this as choice for the modeller; our analysis handles both cases as well as the cases in which there are no within cluster or no between cluster covariates because they are all special cases of the model (\ref{nerm}) which we re-express as
\begin{equation}\label{mean}
y_{ij} = \beta_0+\sx_{i}^{(b)T}\bbeta_1+ \sx_{ij}^{(w)T}\bbeta_2 + \alpha_i + e_{ij}, \qquad j=1,\ldots,m_i, \, i=1,\ldots,g,
\end{equation}
where $\beta_0$ is the unknown intercept, $\bbeta_1$ is the unknown between cluster slope parameter and $\bbeta_2$ is the unknown within cluster slope parameter.  We treat the covariates as fixed; when they are random, we condition on them, though we omit this from the notation.  We assume throughout that the true model that describes the data generating mechanism is (\ref{mean}) with general parameter $\bomega=[\bbeta_0,\bbeta_1^T,\siga^2,\bbeta_2^T,\sige^2]^T$,  true parameter $\dot\bomega=[\dbeta_0,\dot\bbeta_1^T,\dsiga^2,\dot\bbeta_2^T,\dsige^2]^T$  and  take all expectations under the true model.  %We use the subscript zero with a vector parameter and a dot above a scalar parameter to denote the true parameter value  so
The order of the parameters in $\bomega$  and $\dot\bomega$ groups the between parameters and the within parameters together and simplifies the presentation of our results.  %We work with the model (\ref{mean}) so we have both between cluster and within cluster regression parameters to estimate.   If we only have no between cluster covariates,we discard $\bbeta_1$, while if we have no within cluster covariates, we discard $\bbeta_2$.  The results for these cases can be obtained as special cases of the general results by deleting the components of vectors  and the rows and columns of matrices corresponding to the discarded parameter.  

To simplify notation, let $\tau_i=m_i/(\sigma_e^2 + m_i \sigma_a^2)$ with true value $\dot\tau_i$, $m_L=\min_{1\le i \le g} m_i$,
\begin{displaymath}
\begin{split}
& \bar{y}_{i}=\frac{1}{{m_i}}\sum_{j=1}^{m_i}y_{ij}, \quad\bar{\sx}_{i}^{(w)}=\frac{1}{m_i}\sum_{j=1}^{m_i}\sxij^{(w)},\quad
S_{w}^{y}=\sum_{i=1}^g\sum_{j=1}^{m_i}(\yij-\yib)^2, \\&
\bs_{w}^{xy}=\sum_{i=1}^g\sum_{j=1}^{m_i}(\sxij^{(w)}-\bar{\sx}_{i}^{(w)})(\yij-\yib), \quad \text{and}\quad\\&
\bs_{w}^{x}=\sum_{i=1}^g\sum_{j=1}^{m_i}(\sxij^{(w)}-\bar{\sx}_{i}^{(w)})(\sxij^{(w)}-\bar{\sx}_{i}^{(w)})^T.
\end{split}
\end{displaymath} 
The  log-likelihood for the parameters in the model (after discarding constant terms) is
\begin{equation}\label{owlogli}
\begin{split}
l(\bomega)&=\frac{1}{2}\sumig\log(\tau_i)-\frac{n-g}2\log\sige^2-\frac{1}{2\sige^2}(\swy-2\bs_{w}^{xyT}\bbeta_{2}+ \bbeta_{2}^T\bs_{w}^x\bbeta_{2})
\\&\qquad
-\frac{1}{2}\sumig \tau_i(\yib-\beta_{0}-\sx_{i}^{(b)T}\bbeta_{1}-\bar{\sx}_{i}^{(w)T}\bbeta_{2})^2.
\end{split}
\end{equation}
To  maximize $l(\bomega)$ and find the maximum likelihood estimator $\hat{\bomega}$ of $\bomega$, we differentiate (\ref{owlogli}) with respect to $\bomega$ to obtain the estimating function $\bpsi(\bomega)$ and then solve the estimating equation $\bzero_{[p_b+p_w+3:1]} = \bpsi(\bomega)$. 
The components of  $\bpsi(\bomega)$ are 
\begin{equation}\label{eqm}
\begin{split}
&l_{\beta_{0}}(\bomega)=\sumig \tau_i (\yib-\beta_{0}-\sx_{i}^{(b)T}\bbeta_{1}-\bar{\sx}_{i}^{(w)T}\bbeta_{2}),\\&
\ssl_{\bbeta_{1}}(\bomega)=\sumig\tau_i\sx_{i}^{(b)}(\yib-\beta_{0}-\sx_{i}^{(b)T}\bbeta_{1}-\bar{\sx}_{i}^{(w)T}\bbeta_{2}),\\&
l_{\siga^2}(\bomega)=-\frac{1}{2}\sumig \tau_i +\frac{1}{2}\sumig \tau_i^2 (\yib-\beta_{0}-\sx_{i}^{(b)T}\bbeta_{1}-\bar{\sx}_{i}^{(w)T}\bbeta_{2})^2, \\&
\ssl_{\bbeta_{2}}(\bomega)=\frac{1}{\sige^2}\bs_{w}^{xy}-\frac{1}{\sige^2}\bs_{w}^x\bbeta_{2}+\sumig \tau_i\bar{\sx}_{i}^{(w)}(\yib-\beta_{0}-\sx_{i}^{(b)T}\bbeta_{1}-\bar{\sx}_{i}^{(w)T}\bbeta_{2}),\\&
l_{\sige^2}(\bomega)=-\frac{1}{2}\sumig m_i^{-1}\tau_i-\frac{n-g}{2\sige^2}+\frac{1}{2\sige^4}(S_{w}^y-2\bs_{w}^{xyT}\bbeta_{2}+\bbeta_{2}^T\bs_{w}^x\bbeta_{2})\\&\qquad\qquad+\frac{1}{2}\sumig m_i^{-1}\tau_i^2(\yib-\beta_{0}-\sx_{i}^{(b)T}\bbeta_{1}-\bar{\sx}_{i}^{(w)T}\bbeta_{2})^2 .
\end{split}
\end{equation}
Let $\bpsi(\bomega)^T=[\bpsibb(\bomega)^T,\bpsiww(\bomega)^T]$, where $\bpsibb(\bomega)^T=[l_{\beta_{0}}(\bomega),\ssl_{\bbeta_{1}}(\bomega)^T,l_{\siga^2}(\bomega)]$ are the estimating functions for the between cluster parameters and $\bpsiww(\bomega)^T=[\ssl_{\bbeta_{2}}(\bomega)^T,l_{\sige^2}(\bomega)]$ are the estimating functions for the within cluster parameters.  The derivatives of the estimating functions which we write as $\nabla \bpsi(\bomega)$ and their expected values under the model are given in the Appendix.

To control the estimating function and derive the asymptotic properties of $\hbomega$ from the estimating equation, we impose the following condition. 

\medskip\noindent
\textbf{Condition A}
\begin{itemize}
	\item[1.] The model (\ref{mean}) holds with true parameters $\dot\bomega$ inside the parameter space $\Omega$.
	\item[2.] The number of clusters $g \to \infty$ and the minimum number of observations per cluster $m_L\to\infty$. 
	\item[3.]   The random variables $\{\alpha_i\}$ and $\{\eij\}$ are independent and identically distributed and 	there is a  $\delta>0$ such that $\E| \alpha_i|^{4+\delta}<\infty$ and $\E |e_{ij}|^{4+\delta}<\infty$
	for all $i=1,\ldots,g$ and $j\in\mathcal{S}_i$.
	
	\item[4.]  Suppose that the limits $ \ssc_1 = \lim_{g \rightarrow \infty} g^{-1}\sum_{i=1}^g \sx_{i}^{(b)}$, \\ $\bc_2 = \lim_{g \rightarrow \infty} g^{-1}\sum_{i=1}^g \sx_{i}^{(b)}\sx_{i}^{(b)^T}$ and $ \bc_{3} = \lim_{g\to\infty}\lim_{m_L\to\infty} n^{-1}\bs_{w}^x$ exist and the matrices $\bc_2$ and $\bc_{3}$ are positive definite.  Suppose further that $\lim_{g\to\infty}\lim_{m_L\to\infty}\frac{1}{g}\sumig|\bar{\sx}_{i}^{(w)}|^2<\infty$, and there is a $\delta >0$ such that $\lim_{g\to\infty}g^{-1}\sumig|\sx_{i}^{(b)}|^{2+\delta}<\infty$ and  \\$\lim_{g\to\infty}\lim_{m_L\to\infty}n^{-1}\sumig\sumjmi|\sx_{ij}^{(w)} - \bar\sx_{i}^{(w)}|^{2+\delta}  <\infty$.

\end{itemize}
These are very mild conditions which are often satisfied in practice.   Conditions A3 and A4 ensure that limits needed to ensure the existence of the asymptotic variance of the estimating function exist and that we can establish a Lyapounov condition and hence a central limit theorem for the estimating function.   They also ensure that minus the appropriately normalised second derivative of the estimating function converges to $\bb$ given in (\ref{bbbbb})  below.   Unlike in the case of fixed $m_i$, A4 does not involve unknown parameters through the weights $\dot\tau_i$.  

Our main result is the following theorem which we prove in Section \ref{sec:proof}.

\begin{theoremm}  \label{thm1}
	Suppose  Condition A holds. Then, as $g, m_L \to\infty$, there is a solution $\hbomega$ to  the estimating equations $\bzero_{[p_b+p_w+3:1]}=\bpsi(\bomega)$, satisfying $|\bkh(\hbomega-\dot\bomega)|=O_p(1)$, where $\bk = \diag(g,g\bone_{p_b}^T,g,n\bone_{p_w}^T,n)$ with $\bone_p$ the $p$ vector of ones.  Moreover, $\hbomega$ has the asymptotic representation
\begin{equation}\label{rep}
\bkh(\hbomega-\dot\bomega)=	\bb^{-1}\bkmh\bxi+o_p(1),
\end{equation}
where $\bb$ is given by (\ref{bbbbb}) below and $\bxi = [\xi_{\beta_0}, \bxi_{\bbeta_1}^T, \xi_{\siga^2}, \bxi_{\bbeta_2}^T, \xi_{\sige^2}]^T$ has components
	\begin{equation*}\label{mmeq1}
	\begin{split}
	&\xi_{\beta_0}=\frac{1}{\dsiga^2}\sumig\alpha_i,\qquad \bxi_{\bbeta_1}=\frac{1}{\dsiga^2}\sumig\sx_i^{(b)}\alpha_i,\qquad 
	\xi_{\siga^2}= \frac{1}{2\dsiga^4} \sumig(\alpha_i^2-\dsiga^2),\\&
	\bxi_{\bbeta_2}=\frac{1}{\dsige^2}\sumig\sumjmi(\sx_{ij}^{(w)}-\bar{\sx}_{i}^{(w)})\eij \qquad
	\mbox{and} \qquad 
	\xi_{\sige^2}=\frac{1}{2\dsige^4}\sumig\sumjmi(\eij^2-\dsige^2).
	\end{split}
	\end{equation*}
	It follows that
	\begin{equation*}
	\bkh(\hbomega-\dot\bomega) \xrightarrow{D} N(\bzero,\bc),
	\end{equation*}
	where  
	\begin{displaymath}
	\begin{split}
	\bc &=  \left[ \begin{matrix}
	\dsiga^2 d&  \dsiga^2\sd_1^T &\E\alpha_1^3&\bzero_{[1:p_w]}&0\\
	\dsiga^2\sd_1 & \dsiga^2\bd_2&\bzero_{[p_b:1]}&\bzero_{[p_b:p_w]}&\bzero_{[p_b:1]}\\
	\E\alpha_1^3 &\bzero_{[1:p_b]}&\E\alpha_1^4-\dsiga^4&\bzero_{[1:p_w]}&0\\
	\bzero_{[p_w:1]} &\bzero_{[p_w:p_b]}&\bzero_{[p_w:1]}&\dsige^2 \bc_3^{-1}&\bzero_{[p_w:1]}\\
	0&\bzero_{[1:p_b]}&0&0& \E e_{ij}^4-\dsige^4
	\end{matrix}\right]
	\end{split}
	\end{displaymath}
with $d= 1//(1- \ssc_1^T\bc_2^{-1}\ssc_1 )$, $\sd_1= -\ssc_1^T\bc_2^{-1}/(1- \ssc_1^T\bc_2^{-1}\ssc_1 )$ and $\bd_2=\bc_2^{-1} + \bc_2^{-1}\ssc_1\ssc_1^T\bc_2^{-1}/(1- \ssc_1^T\bc_2^{-1}\ssc_1)$.
%	\begin{displaymath}
%	\begin{split}
%	\bc &=  \left[ \begin{matrix}
%	\dsiga^2/(1- \ssc_1^T\bc_2^{-1}\ssc_1 )& - \dsiga^2 \ssc_1^T\bc_2^{-1}/(1- \ssc_1^T\bc_2^{-1}\ssc_1 ) &\E\alpha_1^3&\bzero_{[1:p_w]}&0\\
%	- \dsiga^2 \bc_2^{-1}\ssc_1/(1- \ssc_1^T\bc_2^{-1}\ssc_1 ) & \dsiga^2\{\bc_2^{-1} + \bc_2^{-1}\ssc_1\ssc_1^T\bc_2^{-1}/(1- \ssc_1^T\bc_2^{-1}\ssc_1)\}&\bzero_{[p_b:1]}&\bzero_{[p_b:p_w]}&\bzero_{[p_b:1]}\\
%	\E\alpha_1^3 &\bzero_{[1:p_b]}&\E\alpha_1^4-\dsiga^4&\bzero_{[1:p_w]}&0\\
%	\bzero_{[p_w:1]} &\bzero_{[p_w:p_b]}&\bzero_{[p_w:1]}&\dsige^2 \bc_3^{-1}&\bzero_{[p_w:1]}\\
%	0&\bzero_{[1:p_b]}&0&0& \E e_{ij}^4-\dsige^4
%	\end{matrix}\right].
%	\end{split}
%	\end{displaymath}
\end{theoremm}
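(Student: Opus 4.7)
The plan is to prove Theorem~\ref{thm1} by the classical Taylor-expansion argument for M-estimators, with the twist that the normalization matrix $\bk$ is block diagonal and mixes the rates $\sqrt{g}$ (between cluster parameters) and $\sqrt{n}$ (within cluster parameters). Expanding the estimating equation $\bzero=\bpsi(\hbomega)$ around $\dot\bomega$ and left-multiplying by $\bkmh$ while inserting $\bkh\bkmh$ gives the target representation
\[
\bkh(\hbomega-\dot\bomega) = \{-\bkmh\nabla\bpsi(\dot\bomega)\bkmh\}^{-1}\bkmh\bpsi(\dot\bomega) + \{-\bkmh\nabla\bpsi(\dot\bomega)\bkmh\}^{-1}\bkmh\br,
\]
where $\br$ is the Taylor remainder. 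Three ingredients must then be established: (i) convergence of the normalized score $\bkmh\bpsi(\dot\bomega) - \bkmh\bxi = o_p(1)$ together with a Lyapounov CLT for $\bkmh\bxi$; (ii) a weak law showing $-\bkmh\nabla\bpsi(\dot\bomega)\bkmh\xrightarrow{P}\bb$; and (iii) uniform smallness of $\bkmh[\nabla\bpsi(\bar\bomega)-\nabla\bpsi(\dot\bomega)]\bkmh$ on $\{|\bkh(\bar\bomega-\dot\bomega)|\le M\}$, which controls $\bkmh\br$.

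For (i), I would substitute $\bar y_i - \dot\beta_0 - \sx_i^{(b)T}\dot\bbeta_1 - \bar\sx_i^{(w)T}\dot\bbeta_2 = \alpha_i + \bar e_i$ and $y_{ij}-\bar y_i = e_{ij}-\bar e_i$ into each component of (\ref{eqm}). The between components $l_{\beta_0},\ssl_{\bbeta_1},l_{\siga^2}$ reduce, after invoking $\dot\tau_i \to 1/\dsiga^2$ and showing that the $\bar e_i$ contributions are of order $O_p(\sqrt{g/m_L})$, to independent cluster sums in $\alpha_i$, $\sx_i^{(b)}\alpha_i$ and $\alpha_i^2-\dsiga^2$ respectively. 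The within components $\ssl_{\bbeta_2},l_{\sige^2}$ reduce analogously to independent sums over $i,j$ in $(\sx_{ij}^{(w)}-\bar\sx_i^{(w)})e_{ij}$ and $e_{ij}^2-\dsige^2$; here the algebraic identity $\sum_j(\sx_{ij}^{(w)}-\bar\sx_i^{(w)})=\bzero$ eliminates the $\bar e_i$ cross term. A Lyapounov CLT for $\bkmh\bxi$ then follows under the $4+\delta$ moment assumption A3 and the $2+\delta$ covariate condition A4. For (ii), the explicit second derivatives listed in the Appendix have almost-sure limits obtained by the same two tools, namely $\dot\tau_i \to 1/\dsiga^2$ as $m_L\to\infty$ and the limit matrices $\ssc_1,\bc_2,\bc_3$ of A4. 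Step (iii) then follows from uniform mean value bounds on the third derivatives of $\bpsi$ combined with the weak laws of (ii).

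The main obstacle, and the essential content of the theorem, is showing that the off-diagonal blocks of $-\bkmh\nabla\bpsi(\dot\bomega)\bkmh$ linking the between and within parameter blocks vanish in probability, giving $\bb$ its block-diagonal structure and hence the asserted asymptotic independence. The relevant cross derivatives produce sums of the forms $\sum_i\dot\tau_i\bar\sx_i^{(w)T}$, $\sum_i\dot\tau_i\sx_i^{(b)}\bar\sx_i^{(w)T}$ and $\sum_i m_i^{-1}\dot\tau_i^2$; after Cauchy--Schwarz and the bound $g^{-1}\sum_i|\bar\sx_i^{(w)}|^2=O(1)$ from A4, each has Euclidean norm $O(g)$, so the mixed normalization $(gn)^{-1/2}$ contributes an extra factor bounded by $\sqrt{g/n}\le m_L^{-1/2}\to 0$. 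Thus these blocks vanish without any restriction on the relative rates of $g$ and $m_L$; a parallel argument shows the cross-covariances of $\bkmh\bxi$ vanish by the independence of $\{\alpha_i\}$ and $\{e_{ij}\}$, producing the block-diagonal $\bc$. Finally, existence of $\hbomega$ with $|\bkh(\hbomega-\dot\bomega)|=O_p(1)$ follows from a standard Brouwer argument: with $M$ large, the scalar $\bpsi(\bomega)^T(\bomega-\dot\bomega)$ is negative on the boundary $\{|\bkh(\bomega-\dot\bomega)|=M\}$ with probability approaching one (by (i) and the negative-definiteness of $-\bb$ from (ii)), so $\bpsi$ has a root inside. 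Substituting this root back into the expansion yields (\ref{rep}), and the multivariate CLT combined with the block-diagonal $\bb$ and $\cov(\bkmh\bxi)$ delivers the stated $\bc=\bb^{-1}\cov(\bkmh\bxi)\bb^{-1}$.
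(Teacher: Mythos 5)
Your proposal is correct in outline and shares the paper's overall skeleton (expansion of the normalized score, a Lyapounov CLT for $\bkmh\bxi$, convergence of $-\bkmh\E\nabla\bpsi(\dot\bomega)\bkmh$ to the block-diagonal $\bb$ with the off-diagonal blocks killed by the factor $(g/n)^{1/2}\le m_L^{-1/2}$, and existence via the sign of $(\bomega-\dot\bomega)^T\bpsi(\bomega)$ on the boundary of $\mathcal{N}$, which is exactly the paper's appeal to Result 6.3.4 of Ortega and Rheinboldt). Where you genuinely diverge is in how the remainder is controlled. You propose the classical Cram\'er route: a mean-value expansion of $\nabla\bpsi$ itself, with uniform bounds on the \emph{third} derivatives delivering $\sup_{\bomega\in\mathcal{N}}\|\bkmh\{\nabla\bpsi(\bOmega)-\nabla\bpsi(\dot\bomega)\}\bkmh\|=o_p(1)$. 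The paper instead splits $\bkmh\{\bpsi(\bomega)-\bpsi(\dot\bomega)\}$ into its expectation (the deterministic term $T_2$, handled by Lemma \ref{lem5}) and its centered fluctuation (the term $T_3$), and controls the latter by discretizing $\mathcal{N}$ into $N=O(g^{1/4})$ cubes, applying Chebychev with the uniform variance bounds of Lemma \ref{lem7} over the grid, and only then using a mean-value step within each cube; this means the Hessian fluctuation need only be $o_p(g^{1/4})$ (Lemma \ref{lem8}) rather than $o_p(1)$, and no third derivatives are ever computed. Your route would work for this smooth likelihood, but the third-derivative step is precisely where the mixed $g^{1/2}/n^{1/2}$ normalization demands care, and you assert it in one line; the paper's chaining argument buys a proof that stays entirely at the level of second derivatives and variances. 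Two small slips to fix: the identity $y_{ij}-\bar y_i=e_{ij}-\bar e_i$ should read $y_{ij}-\bar y_i=(\sx_{ij}^{(w)}-\bar\sx_i^{(w)})^T\dot\bbeta_2+e_{ij}-\bar e_i$ (the covariate term cancels only inside the quadratic forms $S_w^y-2\bs_w^{xyT}\dot\bbeta_2+\dot\bbeta_2^T\bs_w^x\dot\bbeta_2$), and for $\xi_{\sige^2}$ you also need to account for the $-\sum_i(m_i\bar e_i^2-\dot\sigma_e^2)$ term arising from $\sum_{ij}(e_{ij}-\bar e_i)^2$, which is what the paper's display (\ref{bbbb}) does.
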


\medskip
We now consider REML estimation.  To describe REML, we group the parameters into the regression parameters $\bbeta=[\beta_0,\bbeta_1^T, \bbeta_2^T]$ and variance components $\btheta=[\sigma_{\alpha}^2,\sigma_e^2]^T$.% with true versions  $\dot\bbeta=[\dbeta_0,\dot\bbeta_1^T,\dot\bbeta_2^T]^T$  and $\dot\btheta=[\dsiga^2,\dsige^2]^T$, respectively.  
The REML criterion function is obtained by replacing the regression parameters $\bbeta$ in the log-likelihood (\ref{owlogli}) by their maximum likelihood estimators for each fixed $\btheta$ to produce a profile log-likelihood for $\btheta$ and then adding an adjustment term.  Let $\sz_i = [1, \sx_i^{(b)T}, \bar{\sx}_i^{(w)T}]^T$,  $\sw = [0, \bzero_{[1:p_b]}, \bs_w^{xyT}]^T$ and $\bw= \mbox{block diag}(0, \bzero_{[p_b:p_b]}, \bs_w^x)$. Then, for each fixed $\btheta$, we solve the estimating equations in (\ref{eqm}) for $\bbeta$ to obtain 
\begin{equation*}\label{profests}
\begin{split}
\hat{\bbeta}(\btheta) &= \bDelta(\btheta)^{-1}\Big(\sumig\tau_i\sz_i\yib + \sigma_e^{-2}\sw\Big), \quad \mbox{ with } \quad \bDelta(\btheta) = \sumig\tau_i\sz_i\sz_i^T +\sigma_e^{-2}\bw,
\end{split}
\end{equation*}
%where 
%\[
%\bDelta(\btheta) = \sumig\frac{{m_i}}{\sigma_e^2 + m_i\sigma_{\alpha}^2}\sz_i\sz_i^T + \frac{1}{\sigma_e^2}\bw .
%\]
%
and the REML criterion function is given by
\begin{equation*}\label{owloglir}
l_R(\btheta; \by)=  l(\hat{\bbeta}(\btheta), \btheta; \by) - \frac{1}{2} \log\left\{|\bDelta(\btheta)|\right\}.
\end{equation*}
The REML estimator $\hat{\btheta}_R$ of $\btheta$ is the maximiser of the REML criterion function $l_R(\btheta; \by)$; we call $\hat{\bbeta}_R=\hat{\bbeta}(\hat{\btheta}_R)$ the REML estimator of $\bbeta$ and write $\hbomega_R = (\hat{\beta}_{R0}, \hat{\bbeta}_{R1}^T, \hat{\sigma}_{R\alpha}^2, \hat{\bbeta}_{R2}^T, \hat{\sigma}_{Re}^2)^T$.  

Since $\bDelta(\btheta)$ does not depend on $\bbeta$, the REML estimator is also the maximiser of the adjusted log-likelihood
\begin{equation*} \label{adjlogli}
l_A(\bbeta, \btheta; \by)=  l(\bomega; \by) - \frac{1}{2} \log\left\{|\Delta(\btheta)|\right\}.
\end{equation*}
That is, we can find the REML estimator in one step instead of two \citep{patefield1977maximized} by maximising $l_A(\bbeta, \btheta; \by)$.  In either case, the estimating function is $\bpsi_{A}(\bomega)=[l_{A\beta_0}(\bomega), \,\ssl_{A\beta_1}(\bomega)^T, \,l_{A\siga^2}(\bomega), \,\ssl_{A\beta_2}(\bomega)^T, \,l_{A\sige^2}(\bomega)]^T$.   The derivatives $l_{A\beta_0}(\bomega)=l_{\beta_0}(\bomega)$, $ \ssl_{A\beta_1}(\bomega)=\ssl_{\beta_1}(\bomega)$ and $\ssl_{A\beta_2}(\bomega)=\ssl_{\beta_2}(\bomega)$, while
\begin{equation*}\label{extralikelhoo}
\begin{split}
&l_{A\siga^2}(\bomega) =l_{\siga^2}(\bomega) -\frac{1}{2}\mbox{trace}\Big\{\bDelta(\btheta)^{-1} \frac{\partial\bDelta(\btheta)}{\partial \sigma_{\alpha}^2}\Big\} \quad \mbox{ with } \quad \frac{\partial\bDelta(\btheta)}{\partial \sigma_{\alpha}^2} =  -\sumig\tau_i^2\sz_i\sz_i^T\\
&
l_{A\sige^2}(\bomega) =l_{\sige^2}(\bomega)-\frac{1}{2}\mbox{trace}\Big\{\bDelta(\btheta)^{-1}\frac{\partial\bDelta(\btheta)}{\partial \sigma_{e}^2}\Big\} \quad \mbox{ with } \\&\qquad \frac{\partial\bDelta(\btheta)}{\partial \sigma_{e}^2} = -\sumig m_i^{-1}\tau_i^2\sz_i\sz_i^T - \sigma_e^{-4}\bw.
\end{split}
\end{equation*}
%where
%\begin{displaymath}
%\begin{split}
%\frac{\partial\bDelta(\btheta)}{\partial \sigma_{\alpha}^2} &=  -\sumig\frac{m_i^2}{(\sigma_e^2 + m_i\sigma_{\alpha}^2)^{2}}\sz_i\sz_i^T,\qquad
%%
%\frac{\partial\bDelta(\btheta)}{\partial \sigma_{e}^2} = -\sumig\frac{m_i}{(\sigma_e^2 + m_i\sigma_{\alpha}^2)^2}\sz_i\sz_i^T - \frac{1}{\sigma_e^4}\bw.
%\end{split}
%\end{displaymath}
We show that the REML estimator is asymptotically equivalent to the maximum likelihood estimator by showing that the contribution from the adjustment terms to the estimating function is asymptotically negligible.  This yields the following theorem which we prove in Section \ref{sec:proof}.

\begin{theoremm}
	Suppose Condition A holds. Then, as $g, m_L \to\infty$, there is a solution $\hbomega_R$ to the adjusted likelihood estimating equations satisfying $|\bk^{1/2}(\hbomega_R-\dot\bomega)|=O_p(1)$ and
	\begin{displaymath}
	\bk^{\frac{1}{2}}(\hbomega_R-\hbomega)=\ o_p(1),
	\end{displaymath}
so Theorem 1 applies to the REML estimator.
\end{theoremm}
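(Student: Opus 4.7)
The plan is to write the REML estimating function as a perturbation of the MLE estimating function, $\bpsi_A(\bomega) = \bpsi(\bomega) + \boldsymbol{\varphi}(\bomega)$, where $\boldsymbol{\varphi}$ is zero in the $\beta_0$, $\bbeta_1$, $\bbeta_2$ coordinates and equals
\begin{equation*}
\varphi_{\siga^2}(\bomega) = \tfrac{1}{2}\tr\Big\{\bDelta(\btheta)^{-1}\sumig \tau_i^2\sz_i\sz_i^T\Big\}, \qquad \varphi_{\sige^2}(\bomega) = \tfrac{1}{2}\tr\Big\{\bDelta(\btheta)^{-1}\Big(\sumig m_i^{-1}\tau_i^2\sz_i\sz_i^T + \sige^{-4}\bw\Big)\Big\}
\end{equation*}
in the variance-component coordinates, and then to show that, after normalization by $\bkmh$, this perturbation is asymptotically negligible. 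The conclusion will follow from a standard Taylor-expansion argument that transfers the $\bkh$-rate consistency and asymptotic representation of $\hbomega$ established in Theorem 1 to $\hbomega_R$.

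The technical heart of the proof is a careful block analysis of $\bDelta(\btheta) = \sumig \tau_i \sz_i\sz_i^T + \sige^{-2}\bw$ and its inverse, uniformly on a shrinking neighborhood of $\dot\btheta$. Partition $\bDelta$ so that the first $1+p_b$ coordinates correspond to $(\beta_0,\bbeta_1)$ and the last $p_w$ to $\bbeta_2$. By Condition A4, the top-left block is of order $g$ with a positive-definite limit after division by $g$, while in the bottom-right block the term $\sige^{-2}\bs_w^x$ of order $n$ dominates and $n^{-1}\bs_w^x$ has the positive-definite limit $\bc_3$. A Schur-complement calculation then shows that the top-left block of $\bDelta(\btheta)^{-1}$ is of order $g^{-1}$ and the bottom-right block is of order $n^{-1}$. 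Matching orders with $\sumig \tau_i^2 \sz_i\sz_i^T = O(g)$ in every block and with $\sige^{-4}\bw = O(n)$ in the $\bbeta_2$ block yields $\varphi_{\siga^2}(\bomega) = O(1)$ and $\varphi_{\sige^2}(\bomega) = O(1)$, so that $\bkmh\boldsymbol{\varphi}(\bomega)$ has nonzero entries of sizes $O(g^{-1/2})$ and $O(n^{-1/2})$ respectively, hence is $o(1)$.

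With the perturbation bound in hand, I would first repeat the argument from the proof of Theorem 1 with $\bpsi_A$ in place of $\bpsi$: because $\bkmh\boldsymbol{\varphi}$ is uniformly $o(1)$ on a $\bkmh$-neighborhood of $\dot\bomega$, the normalized adjusted estimating function and its derivative have the same limits as those of $\bpsi$, delivering a solution $\hbomega_R$ with $\bkh(\hbomega_R - \dot\bomega) = O_p(1)$. A Taylor expansion of $\bpsi$ around $\hbomega$ evaluated at $\hbomega_R$ then gives $\bzero = \bpsi_A(\hbomega_R) = \nabla\bpsi(\bomega^*)(\hbomega_R - \hbomega) + \boldsymbol{\varphi}(\hbomega_R)$ for some intermediate $\bomega^*$, using $\bpsi(\hbomega) = \bzero$; multiplying by $\bkmh$ and rearranging,
\begin{equation*}
\bkh(\hbomega_R - \hbomega) = -\big[\bkmh \nabla\bpsi(\bomega^*) \bkmh\big]^{-1}\bkmh \boldsymbol{\varphi}(\hbomega_R) = o_p(1),
\end{equation*}
since the bracketed matrix converges to the invertible limit $-\bb$ identified in Theorem 1. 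The main obstacle I expect is the block analysis of $\bDelta(\btheta)^{-1}$: because $\bw$ enters only the $\bbeta_2$ block while $\bar{\sx}_i^{(w)}$ in $\sz_i$ couples the $\bbeta_2$ block to the between-cluster coordinates, naive operator-norm bounds do not give the sharp $g^{-1}$ and $n^{-1}$ block rates, and the Schur-complement argument must use Condition A4 carefully to identify the dominant scaling and the positive-definite limit in each block.
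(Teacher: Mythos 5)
Your proposal is correct and follows essentially the same route as the paper: both show that the trace adjustment terms $l_{A\theta}(\bomega)-l_{\theta}(\bomega)$, once normalised by $g^{-1/2}$ and $n^{-1/2}$, are uniformly $o_p(1)$ on $\mathcal{N}$ — the paper by sandwiching $\bDelta(\btheta)$ and its derivatives with $\bk_{\bbeta}^{\pm 1/2}$ and showing the normalised matrices converge to finite (invertible, for $\bDelta$) limits, which is the same content as your Schur-complement block-rate analysis — and then invoke the machinery of Theorem 1. Your explicit Taylor step linking $\hbomega_R$ to $\hbomega$ just spells out what the paper compresses into ``the result follows from Theorem 1.''
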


%%%%%%%%%%%%%%%%%%%%%%%%%%%%%%%%%%%%%%%%%%%%%%%%%%%%
\section{Discussion}\label{sec:disc}

Theorems 1 and 2 establish the asymptotic equivalence, asymptotic representations and asymptotic normality for the maximum likelihood and REML estimators of the parameters in the nested error regression model under very mild conditions when both the number of clusters and the cluster sizes increase to infinity.  In this section we interpret and discuss these results before pointing out possible directions for future work.

%\noindent 1.
We can estimate $\dot\bomega$ consistently when $g \to \infty$ with bounded cluster sizes but we need to let $m_L\to \infty$ to estimate the random effects $\{\alpha_i\}$ consistently \citep{jiang1998asymptotic}.  If $m_L\to \infty$ but $g$ is held fixed, we can estimate the within cluster variance $\dot\sige^2$ consistently but not the between cluster variance $\dot\siga^2$. These considerations motivate allowing both $g \to \infty$ and $m_L\to \infty$.  

%\noindent 2.
The asymptotic representation shows that the influence function of the maximum likelihood and REML estimators under the model is given by the summands of $\bb^{-1}\bxi$.  Explicitly, at a point $[ \alpha_i, e_{ij}, \sx^{(b)T}_i, \sx_{ij}^{(w)T}]^T$ (which we suppress in the notation), the influence function is the $(p_b+p_w+3)$-vector function $\blambda =[\lambda_{\beta_0}, \blambda_{\bbeta_1}^T, \lambda_{\siga^2}, \blambda_{\bbeta_2}^T, \lambda_{\sige^2}]^T$, where 
	\begin{equation*}
	\begin{split}
	&\lambda_{\beta_0}=\{(1- \ssc_1^T\bc_2^{-1}\sx_i^{(b)})/(1- \ssc_1^T\bc_2^{-1}\ssc_1 )\}\alpha_i,\\& \blambda_{\bbeta_1}=\{\bc_2^{-1}\sx_i^{(b)} + (1- \ssc_1^T\bc_2^{-1}\sx_i^{(b)})/(1- \ssc_1^T\bc_2^{-1}\ssc_1 )\}\sx_{i}^{(b)}\alpha_i,\\& % corrected
	\lambda_{\siga^2}= \alpha_i^2-\dsiga^2,\qquad
	\blambda_{\bbeta_2}=\bc_3^{-1}(\sx_{ij}^{(w)}-\bar{\sx}_{i}^{(w)})\eij \qquad
	\mbox{and} \qquad 
	\lambda_{\sige^2}=\eij^2-\dsige^2.
	\end{split}
	\end{equation*}
These expressions are not easy to obtain directly because the between and within parameters are estimated at different rates.   As is well-known, the estimators are not robust because the influence function is unbounded in the covariates, random effect and error.

%\noindent 3.
The central limit theorem allows us to construct asymptotic confidence intervals for the parameters in the model.  An asymptotic $100(1-\gamma)\%$ confidence interval for $\dot\beta_{1k}$ is 
\[
[\hat{\beta}_{1k} - \Phi^{-1}(1-\gamma/2)\hat{\sigma}_{\alpha}d_{kk}^{(b)1/2}/g^{1/2},\,\, \hat{\beta}_{1k} + \Phi^{-1}(1-\gamma/2)\hat{\sigma}_{\alpha}d_{kk(b)}^{1/2}/g^{1/2}],
\]
where $d_{kk}^{(b)}$ is the $k$th diagonal element of $\hat{\bc}_2^{-1} + \hat{\bc}_2^{-1}\hat{\ssc}_1\hat{\ssc}_1^T\hat{\bc}_2^{-1}/(1- \hat{\ssc}_1^T\hat{\bc}_2^{-1}\hat{\ssc}_1)$ with $\hat{\ssc}_1= g^{-1}\sum_{i=1}^g \sx_{i}^{(b)}$ and $\hat{\bc}_2 = g^{-1}\sum_{i=1}^g \sx_{i}^{(b)}\sx_{i}^{(b)^T}$, and an asymptotic
$100(1-\gamma)\%$ confidence interval for $\dot\beta_{2r}$ is 
\[
[\hat{\beta}_{2r} - \Phi^{-1}(1-\gamma/2)\hat{\sigma}_{e}d_{rr}^{(w)1/2}/n^{1/2},\,\, \hat{\beta}_{2r} + \Phi^{-1}(1-\gamma/2)\hat{\sigma}_{e}d_{rr}^{(w)1/2}/n^{1/2}],
\]
where $d_{rr}^{(w)}$ is the $r$th diagonal element of $(\bs_{w}^x/n)^{-1}$.  Setting the confidence interval on the $\log$ scale and then backtransforming, an asymptotic $100(1-\gamma)\%$ confidence interval for $\dot\sigma_{\alpha}$ is
\[
\begin{split}
[\hat{\sigma}_{\alpha}\exp\{- \Phi^{-1}(1-\gamma/2)&(\hat{\mu}_{4 \alpha}-\hat{\sigma}_{\alpha}^4)^{1/2} /2g^{1/2}\hat{\sigma}_{\alpha}^2\},\, \\&\hat{\sigma}_{\alpha}\exp\{ \Phi^{-1}(1-\gamma/2)(\hat{\mu}_{4 \alpha}-\hat{\sigma}_{\alpha}^4)^{1/2} /2g^{1/2}\hat{\sigma}_{\alpha}^2\}],
\end{split}
\]
where $\hat{\mu}_{4 \alpha} = g^{-1}\sum_{i=1}^g(\bar{y}-\hat{\beta}_0 - \sx_i^{(b)^T}\hat{\bbeta}_1 - \bar{\sx}_i^{(w)^T}\hat{\bbeta}_2)^4$ estimates $\E\alpha_1^4$.  Squaring the endpoints gives an asymptotic $100(1-\gamma)\%$ confidence interval for $\dot\sigma_{\alpha}^2$.

%\noindent 4.
The results show explicitly that the between and within parameters are estimated at different rates and the form of $\bc$ shows that, even without assuming normality, the maximum likelihood and REML estimators of the within parameters are asymptotically independent of the estimators of the between parameters. That is, the two sets of parameters are asymptotically orthogonal.  The within cluster regression parameter is asymptotically orthogonal  to the within cluster variance and the between cluster slope parameter is asymptotically orthogonal to the between cluster variance, but the intercept is only asymptotically orthogonal to the between cluster variance when the random effect distribution is symmetric.   

%\noindent 5.
When the cluster sizes are fixed, the maximum likelihood and REML estimators all converge to the true parameters at the same rate ($g^{-1/2}$) and the expression for their asymptotic variance is much more complicated.  Appending a subscript $m$ to emphasise that the cluster sizes are fixed at their upper bounds, the asymptotic variance of the estimators is $g^{-1}\bc_m = g^{-1}\bb_m^{-1}\ba_m\bb_m^{-1}$, where $\bb_m= - \lim_{g\to \infty} g^{-1}\E\nabla\psi(\dot\bomega)$ (which we can obtain from (\ref{bfin})) and $\ba_m= \lim_{g\to \infty} g^{-1}\var\{\psi(\dot\bomega)\}$.  We require assumptions on the convergence of weighted means and weighted products of covariates to ensure the existence of $\bb_m$ and $\ba_m$.  Under these assumptions, in general, $\bb_m$ is block diagonal for $[\bbeta^T, \btheta^T]^T$, although it is not block diagonal for $\bomega$ because the $(\siga^2,\sige^2)$ term is nonzero.  When $\bar{\sx}_i^{(w)} = \bzero_{[p_w:1]}$ for all $i = 1,\ldots, g$, the $(\bbeta_2, \beta_0)$ and $(\bbeta_2,\bbeta_1)$ terms are zero, but this does not affect the $(\siga^2,\sige^2)$.  The matrix $\ba_m$ involves third and fourth moments and is rarely evaluated in the non-normal case; general expressions are given in \cite{field2008bootstrap} and expressions specific to the model (\ref{mean}) are available from the authors on request.   It is in general not block diagonal for $[\bbeta^T, \btheta^T]^T$ unless both $\E(\alpha_1^3)=0$ and $\E(e_{11}^3)=0$.  The centering condition makes the covariance of $\ssl_{\bbeta_{2}}(\dot\bomega)$ with all the other components of $\bpsi(\dot\bomega)$ equal zero, but not the covariance between $l_{\beta_{0}}(\dot\bomega)$ or $\ssl_{\bbeta_{2}}(\dot\bomega)$ with $l_{\siga^2}(\dot\bomega)$ or $l_{\sige^2}(\dot\bomega)$.  These limits are not block diagonal for $\bomega$ even when both $\E(\alpha_1^3)=0$ and $\E(e_{11}^3)=0$ (because the covariance between $l_{\siga^2}(\dot\bomega)$ and $l_{\sige^2}(\dot\bomega)$ is nonzero).  Of course,  the nonzero terms in $\bc_m$ involve limits of weighted averages which differ from those in $\bc$.

%\noindent 6.
In working with the model (\ref{mean}), we have both between cluster and within cluster regression parameters to estimate.   If we have no between cluster covariates,we discard $\bbeta_1$, while if we have no within cluster covariates, we discard $\bbeta_2$.  The results for these cases can be obtained as special cases of the general results by deleting the components of vectors  and the rows and columns of matrices corresponding to the discarded parameter.  If there are no between cluster covariates in the model (there is no $\bbeta_1$ in the model), we drop rows and columns $2$ to $p_b+1$ from $\bc$.  If there are no within cluster covariates (there is no $\bbeta_2$ in the model), we drop rows and columns $p_b+3$ to $p_b+p_w+2$ from $\bc$.  There is a corresponding simplification to Condition A4.

%\noindent 7
We have treated the covariates in the model as fixed, conditioning on them when they are random.  As noted by \cite{yoon2020effect}, when the covariates are random, it makes sense to treat them as having a similar covariance structure to the response.  That is, $\sx_{i}^{(b)}$ are independent with mean $\bmu_x^{(b)}$ and variance $\bSigma_x^{(b)}$, and the $\sx_{ij}^{(w)}$ are independent in different clusters but correlated within clusters with mean $\bmu_x^{(w)}$, variance $\bUpsilon_x^{(w)}+\bSigma_x^{(w)}$ and within cluster covariance $\bUpsilon_x^{(w)}$.  The two types of covariates can be correlated.  Condition A holds if both covariates have finite $2+\delta$ moments.  We have $\ssc_1=\bmu_x^{(b)}$,$\bc_2=\bSigma_x^{(b)}+\bmu_x^{(b)}\bmu_x^{(b)T}$ and $\bc_3=\bSigma_x^{(w)}$.  If $\sx_{i}^{(b)}$ contains $\bar{\sx}_{i}^{(w)}$, the terms $\bmu_x^{(b)}$ and variance $\bSigma_x^{(b)}$ contain $\E \bar{\sx}_{i}^{(w)}=\bmu_x^{(w)}$, $\var(\bar{\sx}_{i}^{(w)}) =  \bUpsilon_x^{(w)}+m_i^{-1}\bSigma_x^{(w)} \to \bUpsilon_x^{(w)}$, as $m_L \to \infty$, and the covariance between $\sx_{i}^{(b)}$ and $\bar{\sx}_{i}^{(w)}$.

One motivation for allowing the cluster size to increase with the number of clusters is that, as we have noted, this is required for consistent prediction of the random effects \citep{jiang1998asymptotic}.  We have not considered prediction of the random effects explicitly in this paper but will do so in follow up work.  The present paper makes an important step towards tackling prediction for sample survey applications because our results allow subsampling within clusters.  In particular if the model (\ref{mean}) holds for the finite population, then noninformative subsampling of units within clusters ensures that the sample data satisfy the same model and hence that we can apply Theorems 1 and 2.

The model we have considered is a simple linear mixed model.  It is of interest to extend our results to more general linear mixed models and indeed to generalized linear mixed models.  It is clear that we can extend the hierarchical structure of the model and allow for more variance components.  The effect is to increase the sets of parameters so that there is a set for each level in the hierarchy.  The estimators in each level converge at different rates and the limit distribution has a diagonal block for each level in the hierarchy.   The maximum likelihood and REML estimators are not the only estimators of interest for the parameters of linear mixed models.  Other estimators (including robust estimators) are available and it is also of interest to derive their asymptotic properties.  We expect that  the form of the asymptotic covariance matrices for these estimators will be block diagonal with a separate block for the parameters at each level in the hierarchy, just as we found for the maximum likelihood and REML estimators.  Finally, \cite{jiang1996reml} also allowed the number of covariates to increase asymptotically and showed that the maximum likelihood and REML estimators have different asymptotic properties in this case.  This is also an interesting problem to consider in the framework of this paper.

%%%%%%%%%%%%%%%%%%%%%%%%%%%%%%%%%%%%%%%%%%%%%%%%%%%%

\section{Proofs}\label{sec:proof}

The proofs of Theorems 1 and 2 are presented in Subsection \ref{sec:part4}. The supporting lemmas used in these proofs are then proved in Subsections \ref{sec:part1} and \ref{sec:part2}.

\subsection {Proofs of Theorems 1 and 2}\label{sec:part4}

%\begin{proof}
\textit{Proof.}
	Write 
	\begin{displaymath}
	\begin{split}\label{expan}
	\bkmh\psi(\bomega)%&= \bkmh\bxi-\bb\bkh(\bomega-\dot\bomega) + \bkmh\{\psi(\dot\bomega)-\xi\}\\
	%&+\bkmh\E\{\psi(\bomega)-\psi(\dot\bomega)\} + \bb \bkh(\bomega-\dot\bomega)\\
	%&+ \bkmh[\psi(\bomega)-\psi(\dot\bomega)-\E\{\psi(\bomega)-\psi(\dot\bomega)\}].\\
	&= \bkmh\bxi-\bb\bkh(\bomega-\dot\bomega) + T_1 + T_2(\bomega) + T_3(\bomega), 
	\end{split}
	\end{displaymath}	
	where ${\bb} = \lim_{g\to\infty} \lim_{m_L\to\infty} -\bkmh\E\nabla\psi(\bomega)\bkmh$, $T_1  = \bkmh\{\psi(\dot\bomega)-\xi\}$, $T_2(\bomega) =\bkmh\E\{\psi(\bomega)-\psi(\dot\bomega)\} + \bb \bkh(\bomega-\dot\bomega)$ and $T_3(\bomega) = \bkmh[\psi(\bomega)-\psi(\dot\bomega)-\E\{\psi(\bomega)-\psi(\dot\bomega)\}]$.
%	\begin{displaymath}
%	\begin{split}
%	T_2(\bomega) &=\bkmh\E\{\psi(\bomega)-\psi(\dot\bomega)\} + \bb \bkh(\bomega-\dot\bomega), \\
%	\mbox{and } \quad T_3(\bomega) &= \bkmh[\psi(\bomega)-\psi(\dot\bomega)-\E\{\psi(\bomega)-\psi(\dot\bomega)\}].
%	\end{split}
%	\end{displaymath}	
%	%Let $\mathcal{N} = \{\bomega : \, |\bkh(\bomega-\dot\bomega)| \le M\}$, where $M$ is a  finite constant. 
If we can show that $|T_1| = o_p(1)$, $\underset{\bomega \in \mathcal{N}}{\sup} |T_2(\bomega)| = o(1)$ and $\underset{\bomega \in \mathcal{N}}{\sup} |T_3(\bomega)| = o_p(1)$, respectively, then uniformly on $\mathcal{N}$, we have
	\begin{equation}\label{lastone}
	\bkmh\psi(\bomega)=\bkmh\bxi- \bb \bkh(\bomega-\dot\bomega)+o_p(1).
	\end{equation}
	Multiplying by $(\bomega-\dot\bomega)^T\bkh$, 
	\begin{equation*}
	(\bomega-\dot\bomega)^T\psi(\bomega)=(\bomega-\dot\bomega)^T\bxi- (\bomega-\dot\bomega)^T\bkh\bb \bkh(\bomega-\dot\bomega)+o_p(1).
	\end{equation*}
	Since $B$ is positive definite, the right-hand side of is negative for $M$ sufficiently large. Therefore, according to Result 6.3.4 of \cite{ortega1970iterative}, a solution to the estimating equations exists in probability and satisfies $|\bkh(\hbomega-\dot\bomega)|=O_p(1)$, so $\hbomega \in \mathcal{N}$.  This allows us to substitute  $\hbomega$ for $\bomega$ in (\ref{lastone}) and rearrange the terms to obtain the asymptotic representation for $\hbomega$; the central limit theorem follows from the asymptotic representation and the central limit theorem for $\bxi$ that we establish in Lemma \ref{lem3}.
	
It remains to show that that remainder terms in  (\ref{expan}) are of smaller order and can be ignored.  In Lemma \ref{lem2}, we establish $|T_1| = o_p(1)$ by showing that the result holds for each component of $T_1= {\bk}^{-1/2}\{{\bpsi}(\dot\bomega)-\bxi\}$ by applying Chebychev's inequality and calculating the variances of the components. 
	
Our approach to handling $T_2(\bomega)$ and $T_3(\bomega)$ is inspired by \cite{bickel1975one} who applied similar arguments to one-step regression estimators.  The approach was extended to maximum likelihood and REML estimators in linear mixed models by \cite{richardson1994asymptotic}; the bounds we use require more care with increasing cluster size.  For  $T_2(\bomega)$, we have 
	\begin{displaymath}
	\begin{split}
	\underset{\bomega \in \mathcal{N}}{\sup} |T_2(\bomega)| & \le \underset{\bomega \in \mathcal{N}}{\sup} \left|\bkmh\left[\E\left\{ \bpsi(\bomega)-\bpsi(\dot\bomega)\right\} -\E\nabla\bpsi(\dot\bomega)(\bomega-\dot\bomega)\right] \right| \\& \quad+ \underset{\bomega \in \mathcal{N}}{\sup} |\bkmh\E\nabla\bpsi(\dot\bomega)(\bomega-\dot\bomega)+\bb\bkh(\bomega-\dot\bomega)| \\
		& \le  M\underset{\bomega \in \mathcal{N}}{\sup} \left\|\bk^{-1/2}
	\left\{\E\nabla\bpsi(\bOmega)-\E\nabla\bpsi(\dot\bomega)\right\}\bk^{-1/2}\right\| \\& \quad + M \|-\bkmh\E\nabla\bpsi(\dot\bomega)\bkmh-\bb\|\\
	& \le M\underset{\bomega \in \mathcal{N}}{\sup} \left\|\bk^{-1/2}
	\left\{\E\nabla\bpsi(\bOmega)-\E\nabla\bpsi(\dot\bomega)\right\}\bk^{-1/2}\right\| + M \|\bb_n-\bb\|,
	\end{split}
	\end{displaymath}
where the rows of $\bOmega$ are possibly different but lie between $\bomega$ and $\dot\bomega$ and $\bb_n=-\bkmh\E\nabla\psi(\dot\bomega)\bkmh$.  In Lemma \ref{lem9} we show that $\|\bb_n-\bb\|=o(1)$ and in Lemma \ref{lem5},
	\begin{equation*}\label{meqqp21}
	\underset{\bomega \in \mathcal{N}}{\sup} \left\|\bk^{-1/2}
	\left\{\E\nabla\bpsi(\bOmega)-\E\nabla\bpsi(\dot\bomega)\right\}\bk^{-1/2}\right\| =o(1).
	\end{equation*}

Finally, to handle $T_3(\bomega)$, decompose $\mathcal{N}=\{\bomega:|\bkh(\bomega-\dot\bomega)|\le M\}$ into  the set of $N=O(g^{\frac{1}{4}})$ smaller  cubes   $\mathcal{C}=\{\mathcal{C}(\st_k)\}$, where $\mathcal{C}(\st)= \{\bomega :|\bkh(\st-\dot\bomega)|\le Mg^{-1/4}\}$.  We first show that $|T_2(\bomega)| = o(1)$ holds over the  set of indices $\st_k=(t_{k1},t_{k2},t_{k3},t_{k4},t_{k5})^T$ for the cubes in $\mathcal{C}$ and then that the difference between taking the supremum over a fine grid  of points and over  $\mathcal{N}$ is small. Using Chebychev's inequality, for any $\eta>0$, we have 
	\begin{equation*}\label{meqpf1}
	\begin{split}
	\pr &\left( \max_{1\le k \le N}|\bkmh[\bpsi(\st_k)-\bpsi(\dot\bomega)-\E\{\bpsi(\st_k)-\bpsi(\dot\bomega)\}]|>\eta\right) \\&
	\le \sum_{k=1}^N\pr \left( |\bkmh[\bpsi(\st_k)-\bpsi(\dot\bomega)-\E\{\bpsi(\st_k)-\bpsi(\dot\bomega)\}]|>\eta\right) \\&
	\le\eta^{-2}\sum_{k=1}^N\E|\bkmh[\bpsi(\st_k)-\bpsi(\dot\bomega)-\E\{\bpsi(\st_k)-\bpsi(\dot\bomega)\}]|^2\\&
	= \eta^{-2}g^{-1}\sum_{k=1}^N\E |\bpsibb(\st_k)-\bpsibb(\dot\bomega)-\E\{\bpsibb(\st_k)-\bpsibb(\dot\bomega)\}|^2 \\&\qquad+\eta^{-2}n^{-1}\sum_{k=1}^N\E
	|\bpsiww(\st_k)-\bpsiww(\dot\bomega)-\E\{\bpsiww(\st_k)-\bpsiww(\dot\bomega)\}|^2\\
	& =  \eta^{-2}g^{-1}\sum_{k=1}^N\mbox{trace}[\var\{\bpsibb(\st_k)-\bpsibb(\dot\bomega)\}]\\&\qquad
	+\eta^{-2}n^{-1}\sum_{k=1}^N\mbox{trace}[\var\{\bpsiww(\st_k)-\bpsiww(\dot\bomega)\}].
\end{split}
\end{equation*}
We show in Lemma \ref{lem7} that the variances $\var\{\psibb(\st_k)-\psibb(\dot\bomega)\}$ and $\var\{\psiww(\st_k)-\psiww(\dot\bomega)\}$ are uniformly bounded by $L$, say, so
\begin{equation*}
\begin{split}
\pr \Big( \max_{1\le k \le N} |\bkmh[\bpsi(\st_k)-&\bpsi(\dot\bomega)-\E\{\bpsi(\st_k)-\bpsi(\dot\bomega)\}]|>\eta\Big)\\&
 \le  \eta^{-2}L N \{g^{-1}(2+p_b) + n^{-1}(1+p_w)\}= o(1),
 \end{split}
\end{equation*}
using the fact that $N= O(g^{1/4})$. 
Using Taylor expansion, we get 
	\begin{equation*}\label{meeq222}
	\begin{split}
	\underset{1\le k\le N}{\max}\underset{\bomega\in\mathcal{C}(\st_k)}{\sup}&|\bkmh[\bpsi(\bomega)-\bpsi(\st_k)-\E\{\bpsi(\bomega)-\bpsi(\st_k)\}]|\\&
	= \underset{1\le k\le N}{\max}\underset{\bomega\in\mathcal{C}(\st_k)}{\sup}|\bkmh[\nabla\bpsi(\bOmega_k)(\bomega - \st_k)-\E\nabla\bpsi(\bOmega_k)(\bomega-\st_k)]|
	\\&
	\le M \underset{\bomega \in\mathcal{N}}{\sup} g^{-1/4}\|\bkmh\{\nabla\bpsi(\bOmega)-\E\nabla\bpsi(\bOmega)\}\bkmh\|,
	%\\&
	%\le M \underset{1\le k\le N}{\max}\underset{\bomega\in\mathcal{C}(\st_k)}{\sup} g^{-5/4}\|\nabla\bpsi^{(bb)}\{\bOmega^{(b)}_{(s)}\}-\E\nabla\bpsi^{(bb)}\{\bOmega^{(b)}_{(s)}\}\}\|
	%\\&
	%+ M \underset{1\le k\le N}{\max}\underset{\bomega\in\mathcal{C}(\st_k)}{\sup} g^{-3/4}n^{-1/2}\|\nabla\bpsi^{(bw)}\{\bOmega^{(b)}_{(s)}\}-\E\nabla\bpsi^{(bw)}\{\bOmega^{(b)}_{(s)}\}\}\| \\&
	%+ M \underset{1\le k\le N}{\max}\underset{\bomega\in\mathcal{C}(\st_k)}{\sup} g^{-3/4}n^{-1/2}\|\nabla\bpsi^{(wb)}\{\bOmega^{(w)}_{(s)}\}-\E\nabla\bpsi^{(wb)}\{\bOmega^{(w)}_{(s)}\}\}\| \\&
	%+ M \underset{1\le k\le N}{\max}\underset{\bomega\in\mathcal{C}(\st_k)}{\sup} g^{-1/4}n^{-1}\|\nabla\bpsi^{(ww)}\{\bOmega^{(w)}_{(s)}\}-\E\nabla\bpsi^{(ww)}\{\bOmega^{(w)}_{(s)}\}\}\| 
	\end{split}
	\end{equation*}	
	where the rows of $\bOmega_k$ are between $\st_k$ and $\bomega$. The result follows from Lemma \ref{lem8}.  %\end{proof}
\hfill $\Box$ \\

	%\section{Maximum Adjusted Likelihood Estimator}

%\begin{lma} \label{lem10}
%	Suppose Condition A holds. Then, as $g, m_{L} \to\infty$, there is a solution $\hbtheta_A$ to the maximum adjusted likelihood estimating equation $\bzero = \bpsi_{A}(\btheta)$, satisfying $|\bk^{\frac{1}{2}}(\hbtheta_A-\dot\btheta)|=O_p(1)$. Moreover,
%	\begin{displaymath}
%	\bk^{\frac{1}{2}}(\hbomega_A-\hbomega)=\ o_p(1),
%	\end{displaymath}
%	so the maximum adjusted likelihood estimator has the same asymptotic distribution as the maximum likelihood estimator.
%\end{lma}
%\begin{proof}
\textit{Proof.}
	Let $\bk_{\bbeta} = \mbox{diag}(g, g\bone_{p_b}, n\bone_{p_w})$ and write
	\begin{equation*}\label{extralikelhoo1}
	\begin{split}
	|g^{-1/2}\{ l_{A\siga^2}(\bomega) - l_{\siga^2}(\bomega)\} |&\le %\frac{1}{2g^{1/2}}|\mbox{trace}\Big\{\bDelta(\btheta)^{-1} \frac{\partial\bDelta(\btheta)}{\partial \sigma_{\alpha}^2}\Big\}| = 
	\frac{1}{2g^{1/2}}|\mbox{trace}\Big\{\bk_{\bbeta}^{1/2}\bDelta(\btheta)^{-1} \bk_{\bbeta}^{1/2}\bk_{\bbeta}^{-1/2}\frac{\partial\bDelta(\btheta)}{\partial \sigma_{\alpha}^2}\bk_{\bbeta}^{-1/2}\Big\}|\\
	|n^{-1/2}\{l_{A\sige^2}(\bomega) - l_{\sige^2}(\bomega)\}| &\le   \frac{1}{2n^{1/2}}|\mbox{trace}\Big\{\bk_{\bbeta}^{1/2}\bDelta(\btheta)^{-1} \bk_{\bbeta}^{1/2}\bk_{\bbeta}^{-1/2}\frac{\partial\bDelta(\btheta)}{\partial \sigma_{e}^2}\bk_{\bbeta}^{-1/2}\Big\}|.
	\end{split}
	\end{equation*}
Then from Lemma 4 and the arguments establishing the convergence of $\bb_n$ to $\bb$, we can show that uniformly in $\bomega\in \mathcal{N}$ as $g, m_L \rightarrow \infty$, the matrices $\bk_{\bbeta}^{-1/2}\bDelta(\btheta)\bk_{\bbeta}^{-1/2}$, $\bk_{\bbeta}^{-1/2}\frac{\partial\bDelta(\btheta)}{\partial \sigma_{\alpha}^2}\bk_{\bbeta}^{-1/2}$ and $\bk_{\bbeta}^{-1/2}\frac{\partial\bDelta(\btheta)}{\partial \sigma_{e}^2}\bk_{\bbeta}^{-1/2}$ all converge to $(p_b+p_w+1)\times (p_b+p_w+1)$ matrices with finite elements.  Consequently, both $|g^{-1/2}\{ l_{A\siga^2}(\bomega) - l_{\siga^2}(\bomega)\} |=o_p(1)$ and $|n^{-1/2}\{l_{A\sige^2}(\bomega) - l_{\sige^2}(\bomega)\}|=o_p(1)$ uniformly in $\bomega\in \mathcal{N}$, and the result follows from Theorem 1.  %\end{proof}
\hfill $\Box$ \\

%%%%%%%%%%%%%%%%%%%%%%%%%%%%%%%%%%%%%%%%%%%%%%%%%%%%%

\subsection{Lemmas for the estimating function $\bpsi$} \label{sec:part1}

We prove a central limit theorem for $\bxi$ and that $T_1 = o_p(1)$ (i.e $\bpsi(\dot\bomega)$ can be approximated by $\bxi$).  We also prove that the variances of the components of $\bpsi(\bomega)-\bpsi(\dot\bomega)$ are uniformly bounded

%\subsection{The Central Limit Theorem for $\bxi$}

\begin{lma}  \label{lem3}
	Suppose  Condition A holds.
	Then, as $g, m_L \to\infty$, 
	$\bkmh\bxi \xrightarrow{D} N(\bzero,\ba)$, where
\begin{equation}\label{ggg}
\begin{split}
&\ba=\\&
\left[ \begin{matrix}
1/\dsiga^2&\ssc_1^T/\dsiga^2& \E\alpha_1^3/(2\dsiga^6)&\bzero_{[1:p_w]}&0\\
\ssc_1/\dsiga^2 & \bc_2/\dsiga^2& \ssc_1\E\alpha_1^3/(2\dsiga^6)&\bzero_{[p_b:p_w]}&\bzero_{[p_b:1]}\\
\E\alpha_1^3/(2\dsiga^6) & \ssc_1^T\E\alpha_1^3/(2\dsiga^6)& (\E\alpha_1^4-\dsiga^4)/(4\dsiga^8)&\bzero_{[1:p_w]}&0\\
\bzero_{[p_w:1]} &\bzero_{[p_w:p_b]}&\bzero_{[p_w:1]}& \bc_{3}/\dsige^2&\bzero_{[p_w:1]}\\
0&\bzero_{[1:p_b]}&0&\bzero_{[1:p_w]}& (\E e_{11}^4-\dsige^4)/(4\dsige^8)
\end{matrix}\right].
\end{split}
\end{equation}
\end{lma}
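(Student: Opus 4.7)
The key structural observation is that $\bxi$ splits cleanly into the between cluster block $\bxi^{(b)}=[\xi_{\beta_0},\bxi_{\bbeta_1}^T,\xi_{\siga^2}]^T$, which depends only on $\{\alpha_i\}$ and is normalised by $g^{-1/2}$, and the within cluster block $\bxi^{(w)}=[\bxi_{\bbeta_2}^T,\xi_{\sige^2}]^T$, which depends only on $\{e_{ij}\}$ and is normalised by $n^{-1/2}$. By the independence assumption in Condition~A3, these two blocks are mutually independent, which exactly accounts for the zero off-diagonal blocks of $\ba$ in (\ref{ggg}) and reduces the task to proving two separate multivariate CLTs and then combining them via independence. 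By the Cram\'er--Wold device each such CLT reduces further to a univariate CLT for arbitrary linear combinations of the components.

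\textbf{Two Lyapounov CLTs.} For the between block and an arbitrary fixed vector $\sss^T=[s_0,\sss_1^T,s_2]$, I would write
\[
\sss^T g^{-1/2}\bxi^{(b)} = g^{-1/2}\sumig W_i,\qquad W_i=\frac{s_0+\sss_1^T\sx_i^{(b)}}{\dsiga^2}\alpha_i + \frac{s_2}{2\dsiga^4}(\alpha_i^2-\dsiga^2),
\]
where the $W_i$ are independent, mean zero random variables. The Lyapounov condition $g^{-(1+\delta'/2)}\sumig\E|W_i|^{2+\delta'}\to 0$ for some $\delta'>0$ then follows from a crude bound $|W_i|^{2+\delta'}\le C(1+|\sx_i^{(b)}|^{2+\delta'})|\alpha_i|^{2+\delta'}+C|\alpha_i^2-\dsiga^2|^{2+\delta'}$, combined with A3's $4+\delta$ moments on $\alpha_i$ and A4's bound $g^{-1}\sumig|\sx_i^{(b)}|^{2+\delta}=O(1)$, on taking $\delta'\le\delta/2$. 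An identical treatment applies to $\sr^T n^{-1/2}\bxi^{(w)}=n^{-1/2}\sumig\sumjmi U_{ij}$ with summands $U_{ij}=\dsige^{-2}\sr_1^T(\sxij^{(w)}-\bar\sx_i^{(w)})e_{ij}+(2\dsige^4)^{-1}r_2(e_{ij}^2-\dsige^2)$: these are independent across all $(i,j)$ because the $e_{ij}$ are, and the Lyapounov bound uses A4's $(2+\delta)$-moment bound on the centred within covariates together with A3's moments on $e_{ij}$. Note that $n\to\infty$ follows from $g\to\infty$ and $m_L\to\infty$.

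\textbf{Variance matching and the main obstacle.} The asymptotic variances of each linear combination must match the corresponding blocks of $\ba$. For the between block this is a direct calculation using A4 together with $\var(\alpha_i)=\dsiga^2$, $\cov(\alpha_i,\alpha_i^2)=\E\alpha_1^3$ and $\var(\alpha_i^2)=\E\alpha_1^4-\dsiga^4$: the $(\xi_{\beta_0},\bxi_{\bbeta_1})$ cross term yields $\ssc_1^T/\dsiga^2$, the $(\bxi_{\bbeta_1},\bxi_{\bbeta_1})$ term yields $\bc_2/\dsiga^2$, and the cross terms with $\xi_{\siga^2}$ produce the $\E\alpha_1^3$ factors. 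For the within block the crucial observation is that $(\bxi_{\bbeta_2},\xi_{\sige^2})$ has exactly zero covariance, because $\sum_{j=1}^{m_i}(\sxij^{(w)}-\bar\sx_i^{(w)})=\bzero$ holds identically by the definition of $\bar\sx_i^{(w)}$, so no symmetry assumption on $e_{ij}$ is required; the remaining diagonal entries reduce to $n^{-1}\bs_w^x/\dsige^2\to\bc_3/\dsige^2$ (directly from A4) and $(\E e_{11}^4-\dsige^4)/(4\dsige^8)$. The most delicate bookkeeping is the Lyapounov verification for the between block, where the linear-in-$\alpha_i$ and quadratic-in-$\alpha_i$ contributions have different tail behaviour coupled to the tails of $\sx_i^{(b)}$; splitting via the crude bound above and taking $\delta'$ small enough is the clean way around this coupling.
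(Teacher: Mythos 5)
Your proposal is correct and follows essentially the same route as the paper's proof: partition $\bxi$ into the between block (a function of the $\alpha_i$, normalised by $g^{-1/2}$) and the within block (a function of the $e_{ij}$, normalised by $n^{-1/2}$), exploit their independence for the block-diagonal structure, reduce each block to a univariate CLT via the Cram\'er--Wold device, and verify Lyapounov's condition using the $c_r$-inequality together with Conditions A3--A4. Your explicit remark that the $(\bxi_{\bbeta_2},\xi_{\sige^2})$ covariance vanishes identically because $\sum_{j=1}^{m_i}(\sx_{ij}^{(w)}-\bar\sx_i^{(w)})=\bzero$ is a detail the paper leaves implicit in the computation of $\ba_n$, but it is the right justification.
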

\begin{proof} 
The components of $\bxi$ are sums of independent random variables with zero means and finite variances.
Since $\{\alpha_i\}$ and $ \{e_{ij}\}$ are independent,  it is straightforward to compute $\var\{\bkmh \bxi\}=\ba_{n}$, %where
%\begin{displaymath}
%\begin{split}
%&\ba_{n} %=\var\left[ \begin{matrix}
%%\gmh\bxibb \\n^{-\frac{1}{2}}\bxiww
%%\end{matrix}\right] 
%%=\left[ \begin{matrix}
%%\var[\gmh\bxibb]&\bzero\\
%%\bzero&\var[n^{-\frac{1}{2}}\bxiww]
%%\end{matrix}\right] 
%%\\&
%=\\&\left[ \begin{matrix}
%1/\dot\sigma_{\alpha}^2&g^{-1}\sumig \sx_i^{(b)T}/\dsiga^2&\E\alpha_1^3/(2\dsiga^6)&\bzero_{[1:p_w]}&0\\
%g^{-1}\sumig \sx_i^{(b)}/\dsiga^2& g^{-1}\sumig \sx_i^{(b)}\sx_i^{(b)T}/\dsiga^2&
%g^{-1}\sumig \sx_i^{(b)} \E\alpha_1^3/(2\dsiga^6)&\bzero_{[p_b:p_w]}&\bzero_{[p_b:1]}\\
%\E\alpha_1^3/(2\dsiga^6)& g^{-1}\sumig \sx_i^{(b)T}\E\alpha_1^3/(2\dsiga^6)&(\E\alpha_1^4-\dsiga^4)/(4\dsiga^8)&\bzero_{[1:p_w]}&0\\
%\bzero_{[p_w:1]}&\bzero_{[p_w:p_b]}&\bzero_{[p_w:1]}&n^{-1}\bs_{w}^x/\dsige^2&\bzero_{[p_w:1]}\\
%0&\bzero_{[1:p_b]}&0 &\bzero_{[1:p_w]}&(\E e_{11}^4-\dsige^4)/(4\dsige^8)
%\end{matrix}\right].
%\end{split}
%\end{displaymath}
and then from Condition A4, as $g, \, m_L \rightarrow \infty$, to show that $\ba_{n} \rightarrow \ba$.  Partition $\bxi$ into $\bxibb$ containing the first $p_b+2$ elements (corresponding to the between parameters) and $\bxiww$ containing the remaining $p_w+1$ elements (corresponding to the within parameters). Partition $\ba$ conformably into the block diagonal matrix with diagonal blocks $\ba_{11}$ and $\ba_{22}$, where  $\ba_{11}$ is $(p_b+2) \times (p_b+2)$ and $\ba_{22}$ is $(p_w+1) \times (p_w+1)$.  We prove that $g^{-1/2}\bxi^{(b)}\xrightarrow{D} N(\bzero,\ba_{11})$ and $n^{-1/2}\bxi^{(w)}\xrightarrow{D} N(\bzero,\ba_{22})$, and  the result then follows from the fact that $\bxi^{(b)}$ and $\bxi^{(w)}$ are independent.
	
	Write $\bxi^{(b)} = \sum_{i=1}^g \bxi_{i}^{(b)}$, where the summands $\bxi_{i}^{(b)} = [\xi_{i\beta_{0}}, \bxi_{i\bbeta_{1}}^T, \xi_{i\siga^2}]^T$, and let $\sa$ be a  fixed $(p_b+2)$-vector satisfying $\sa^T\sa=1$. Then $g^{-1/2}\sa^T\bxi^{(b)}$ is a sum of independent scalar random variables with mean zero and finite variance.  It follows from the $c_r$-inequality and Conditions A3 - A4 that Lyapunov's condition holds.
%As $g \to\infty$,  $\var\{g^{-1/2}\sa^T\bxi^{(b)}\}\to \sa^T\ba_{11}\sa$.  Also, from conditions A3 and A4, there exists a $\delta>0$ such that 
%	\begin{displaymath}
%	\begin{split}
%	\sumig\E |g^{-1/2}\sa^T\bxi_{i}^{(b)}|^{2+\delta}  
%	& \le  g^{-1-\delta/2}
%	\sumig\E|\bxi_{i}^{(b)}|^{2+ \delta}\\&
%	%	=  g^{-1-\frac{\delta}{2}}
%	%	\sumig\E[\{\xi_{i\beta_{0(s)}}^2+\xi_{i\betai}^2+\xi_{i\sigass^2}^2\}^{1+\frac{\delta}{2}}]\\&
%	\le  g^{-1-\delta/2} 4^{1+\delta}
%	\sumig \{
%	\E|\xi_{i\beta_{0}}|^{2+\delta}+\E|\bxi_{i\bbeta_{1}}|^{2+\delta}+\E|\xi_{i\siga^2}|^{2+\delta}\}
%	\end{split}
%	\end{displaymath}
%	by the $c_r$-ineqality.   From (\ref{mmeq1}),  $\E|\xi_{i\beta_{0}}|^{2+\delta}=\dsiga^{-2(2+\delta)}\E|\alpha_1|^{2+\delta}$, 
%	$\E|\bxi_{i\bbeta_{1}}|^{2+\delta}=\dsiga^{-2(2+\delta)}|\sx_i^{(b)}|^{2+\delta}\E|\alpha_1|^{2+\delta}$ and 
%	$\E|\xi_{i\siga^2}|^{2+\delta}=(2\dsiga^{4})^{-(2+\delta)}\E|\alpha_1^2-\dsiga^2|^{2+\delta}$, so by 
%	Conditions A3 and A4
%	\begin{displaymath}
%	(\sa^T\ba_{11}\sa)^{-2-\delta}
%	\sumig\E |g^{-\frac{1}{2}}\sa^T\bxi_{i}^{(b)}|^{2+\delta} =O_p(g^{-\delta/2})=o_p(1)
%	\end{displaymath}
%	and Lyapunov's condition holds. 
Consequently $g^{-1/2}\sa^T\bxi^{(b)}$ converges in distribution to $N(0,\sa^T\ba_{11}\sa)$, as $g\to\infty$ and the result follows from  the Cramer-Wold device  \citep[p 49]{billingsley1968convergence}.
	The proof that $n^{-\frac{1}{2}}\bxi^{(w)}$ converges to $N (0,\ba_{22})$, as $g, m_L \to\infty$, is similar.
\end{proof}

In the proofs of Lemmas 2-6, we use the following simple bounds which we gather here for convenience.  Uniformly on $\bomega \in \mathcal{N}$, there exist fixed constants $0 < L_1 < L_2 < \infty$ such that both $m_i L_2^{-1} \le m_i\dot\sigma_{\alpha}^2 \le \dot\sigma_{e}^2+ m_i\dot\sigma_{\alpha}^2 =  m_i\dot\sigma_{\alpha}^2\{\dot\sigma_{e}^2/m_i\dot\sigma_{\alpha}^2  + 1\} \le m_i L_1^{-1}$ and  for $g$ sufficiently large, $m_iL_2^{-1} \le \sigma_{e}^2+ m_i\sigma_{\alpha}^2 \le m_i L_1^{-1}$ hold.  %and $|\dot\sigma_{e}^2/m_i + \dot\sigma_{\alpha}^2 + \sigma_{e}^2/m_i + \sigma_{\alpha}^2| \le L_3$, where $L_3 < \infty$ are fixed constants.and $m_L$
 It follows that uniformly both in $\bomega \in \mathcal{N}$ and $1\le i \le g$, %$L_1 \le \tau_i, \dot\tau_i \le L_2$, and then, for $m_L$ sufficiently large
\begin{equation} \label{bounds}
\begin{split}
& L_1 \le \tau_i, \dot\tau_i \le L_2, \qquad |\tau_i - \dot\tau_i| \le L_2^2M(m_L^{-1}n^{-1/2} +g^{-1/2}) \le O(g^{-1/2})\\
&|\tau_i^2(1- \dot\tau_i^{-1}\tau_i)| = O(g^{-1/2}), \quad
|\tau_i^2 - \dot\tau_i^2| \le |\tau_i - \dot\tau_i||\tau_i+ \dot\tau_i|  = O(g^{-1/2}).
\end{split}	
\end{equation}
We also require the moments of $\bar{e}_i = m_i^{-1}\sum_{j=1}^{m_i}e_{ij}$ which are  
\begin{equation} \label{moments}
\begin{split}
& \E\bar{e}_i=0, \qquad\var \bar{e}_i=m_i^{-1}\dot\sigma_e^2, \qquad \E(\bar{e}_{i}^3) =  m_i^{-2}\E e_{11}^3, \\&
\E(\bar{e}_{i}^4) =  m_i^{-2}3\dot\sigma_e^4 + m_i^{-3}\{\E e_{11}^4-3\dot\sigma_e^4\} \le m_i^{-2}3\dot\sigma_e^4 + m_i^{-3}\E e_{11}^4;
\end{split}	
\end{equation}
 see for example \cite[p 345]{cramer1946mathematical}.  These imply that $\var(\alpha_i+\bar{e}_i) = (\dot\sigma_{\alpha}^2 + m_i^{-1}\dot\sigma_e^2) = \dot\tau_i^{-1}$.

\begin{lma}  \label{lem2}
Suppose Condition A holds.  Then $|T_1|=o_p(1)$. 
\end{lma}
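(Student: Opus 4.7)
The plan is to expand each component of $\bpsi(\dot\bomega) - \bxi$ into a sum of independent mean-zero random variables, bound its variance, and then apply Chebyshev's inequality.  Substituting the true model into (\ref{eqm}) via $\bar y_i - \dot\beta_0 - \sx_i^{(b)T}\dot\bbeta_1 - \bar\sx_i^{(w)T}\dot\bbeta_2 = \alpha_i + \bar e_i$ and using the model-implied identities $\bs_w^{xy} - \bs_w^x\dot\bbeta_2 = \sum_{i,j}(\sx_{ij}^{(w)}-\bar\sx_i^{(w)})e_{ij}$ and $S_w^y - 2\bs_w^{xyT}\dot\bbeta_2 + \dot\bbeta_2^T\bs_w^x\dot\bbeta_2 = \sum_{i,j}(e_{ij}-\bar e_i)^2$, I obtain clean expressions such as
\begin{equation*}
\begin{split}
l_{\beta_0}(\dot\bomega) - \xi_{\beta_0} &= \sum_i(\dot\tau_i - \dsiga^{-2})\alpha_i + \sum_i \dot\tau_i \bar e_i,\\
\ssl_{\bbeta_2}(\dot\bomega) - \bxi_{\bbeta_2} &= \sum_i \dot\tau_i \bar\sx_i^{(w)}(\alpha_i + \bar e_i),\\
l_{\sige^2}(\dot\bomega) - \xi_{\sige^2} &= -\tfrac{1}{2\dsige^4}\sum_i (m_i\bar e_i^2 - \dsige^2) + \tfrac{1}{2}\sum_i m_i^{-1}\dot\tau_i^2\{(\alpha_i+\bar e_i)^2 - \dot\tau_i^{-1}\},
\end{split}
\end{equation*}
with $\ssl_{\bbeta_1}(\dot\bomega)-\bxi_{\bbeta_1}$ and $l_{\siga^2}(\dot\bomega)-\xi_{\siga^2}$ handled by the same method, using the perturbation bounds $\dot\tau_i - \dsiga^{-2} = -\dsige^2/\{\dsiga^2(\dsige^2 + m_i\dsiga^2)\} = O(m_i^{-1})$ and $\dot\tau_i^2 - \dsiga^{-4} = O(m_i^{-1})$.

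For the between components, independence of $\{\alpha_i\}$ and $\{e_{ij}\}$ together with (\ref{bounds}) and (\ref{moments}) give each variance of order at most $\sum_i m_i^{-1} = O(gm_L^{-1})$: the perturbation contribution is $\sum_i(\dot\tau_i-\dsiga^{-2})^2\dsiga^2 = O(gm_L^{-2})$, the $\bar e_i$ contribution is $\sum_i \dot\tau_i^2 m_i^{-1}\dsige^2 = O(gm_L^{-1})$, and the cross contribution is $\sum_i \var(\alpha_i\bar e_i) = O(gm_L^{-1})$; the $\bbeta_1$ components carry an additional $|\sx_i^{(b)}|^2$ factor whose average is bounded by Condition A4.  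Dividing by $g^{1/2}$ and applying Chebyshev gives $O_p(m_L^{-1/2}) = o_p(1)$.  For the within components, Condition A4 yields $\var\{\ssl_{\bbeta_2}(\dot\bomega)-\bxi_{\bbeta_2}\} = O(\sum_i |\bar\sx_i^{(w)}|^2) = O(g)$ and $\var(\bar e_i^2) = O(m_i^{-2})$ from (\ref{moments}) yields $\var\{l_{\sige^2}(\dot\bomega) - \xi_{\sige^2}\} = O(g) + O(gm_L^{-2}) = O(g)$.

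The main obstacle is obtaining $o_p(1)$ for the within-variance component, since its variance is of order $g$ rather than the larger $n$.  The saving feature is purely the growth rate: Condition A2 implies $n \ge gm_L$, so $(g/n)^{1/2} \le m_L^{-1/2}$, and hence $n^{-1/2}$ times anything of variance $O(g)$ is $O_p(m_L^{-1/2}) = o_p(1)$.  The tightness of this bound reflects the structural fact that the leading ``signal'' about $\dsige^2$ in $l_{\sige^2}(\dot\bomega)$ is carried by the $g$ cluster means $\bar e_i$ through $\sum_i(m_i\bar e_i^2 - \dsige^2)$, even though the score is normalized by $n^{-1/2}$; a coarser bound treating this piece as of order $n$ would fail.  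Combining the componentwise bounds completes the proof that $|T_1| = o_p(1)$.
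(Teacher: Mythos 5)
Your proof is correct and follows essentially the same route as the paper: decompose each component of $\bpsi(\dot\bomega)-\bxi$ into sums of independent mean-zero terms, bound the variances using (\ref{bounds}), (\ref{moments}) and Condition A4 to get $O(gm_L^{-1})$ or $O(gm_L^{-2})$ for the between components and $O(g)=o(n)$ for the within components, and conclude by Chebyshev. The only cosmetic difference is that you keep the $-\tfrac{1}{2\dsige^4}\sum_i(m_i\bar e_i^2-\dsige^2)$ term explicit where the paper absorbs it into the $o_p(n^{1/2})$ remainder of (\ref{bbbb}); the content is identical.
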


\begin{proof}
We establish the result for each component of $T_1= {\bk}^{-1/2}\{{\bpsi}(\dot\bomega)-\bxi\}$.
	Write $\bar{y}_i=\dbeta_{0}+\sx_i^{(b)T}\dot{\bbeta}_{1}+\bar{\sx}_{i}^{(w)T}\dot{\bbeta}_{2}+\alpha_i+\bar{e}_{i} = \sz_i^T\dot\bbeta + \alpha_i+\bar{e}_{i}$, where $\bar{e}_i = m_i^{-1}\sum_{j=1}^{m_i}e_{ij}$, and $\yij-\bar{y}_i =(\sx_{ij}^{(w)}-\bar{\sx}_{i}^{(w)})^T\dot{\bbeta}_{2}+\eij-\bar{e}_{i}$.  
	Then
	\begin{equation*}
	\begin{split}
	\ssl_{\bbeta_{1}}(\dot\bomega)- \bxi_{\bbeta_{1}}%&=\sumig\frac{{m_i}\sx_{i}^{(b)}}{\dot\sigma_{e}^2 + m_i\dot\sigma_{\alpha}^2}(\yib-\dot\beta_{0}-\sx_{i}^{(b)T}\dot\bbeta_{1}-\sxbisw\dot\bbeta_{2})- \frac{1}{\dot\sigma_{\alpha}^2}\sumig\sx_i^{(b)}\alpha_i\\
%	&= \sumig\frac{{m_i}\sx_i^{(b)}}{\dot{\sigma}_{e}^2 + m_i\dot{\sigma}_{\alpha}^2} (\alpha_i+\bar{e}_{i})-\frac{1}{\dot\sigma_{\alpha}^2}\sumig\sx_i^{(b)}\alpha_i\\
	&= \sumig(\dot\tau_i-1/\dot\sigma_{\alpha}^2)\sx_i^{(b)}\alpha_i +\sumig\dot\tau_i\sx_i^{(b)}\bar{e}_{i}. 
	\end{split}
	\end{equation*}
	The $k$th components of the two sums in the last line have mean zero and variances
	\begin{equation*}
	\var \{ \sumig ( \dot\tau_i-1/\dot\sigma_{\alpha}^2) x_{ik}^{(b)}\alpha_i\}=\sumig m_i^{-2} \dot\tau_i^2 \dot\sigma_e^4x_{ik}^{(b)2}\dot\sigma_{\alpha}^2 % \le m_L^{-2} L^2 \dot\sigma_e^4\dot\sigma_{\alpha}^2\sumig x_{ik}^{(b)2}  
	=O(m_L^{-2}g)
	\end{equation*}
	and
	\begin{equation*}
	\var \{  \sumig\dot\tau_i x_{ik}^{(b)}\bar{e}_{i(s)} \} = \sumig m_i^{-1}\dot\tau_i^2 \dot\sigma_e^2x_{ik}^{(b)2} % \le m_L^{-1} L^2 \dot\sigma_e^2 \sumig x_{ik}^{(b)2} 
	=O(m_L^{-1}g),
	\end{equation*}
	respectively, using (\ref{bounds}) and (\ref{moments}).  It follows that $\ssl_{\bbeta_{1}}(\dot\bomega)- \bxi_{\bbeta_{1}} = o_p(g^{1/2})$
	and, by essentially the same argument, $l_{\beta_{0}}(\dot\bomega)- \xi_{\beta_{0}} = o_p(g^{1/2})$.
	For the estimating equation for the between variance component, write
	\begin{equation*}
	\begin{split}
	l_{\sigma_{\alpha}^2}(\dot\bomega)- \xi_{\sigma_{\alpha}^2} %\\&
%	=-\frac{1}{2}\sumig\frac{{m_i}}{\dot\sigma_{e}^2 + m_i\dot\sigma_{\alpha}^2}+\frac{1}{2}\sumig\frac{{m_i}^2}{(\dot{\sigma}_{e}^2 + m_i\dot{\sigma}_{\alpha}^2)^2}(\yib-\dot\beta_{0}-\sx_{i}^{(b)T}\dot\bbeta_{1}-\sxbisw\dot\bbeta_{2})^2 
%	-\frac{1}{2\dot\sigma_{\alpha}^4} \sumig(\alpha_i^2-\dot\sigma_{\alpha}^2)\\
%		&=-\frac{1}{2}\sumig\frac{{m_i}}{\dot\sigma_{e}^2 + m_i\dot\sigma_{\alpha}^2}+\frac{1}{2}\sumig\frac{{m_i}^2}{(\dot{\sigma}_{e}^2 + m_i\dot{\sigma}_{\alpha}^2)^2}(\alpha_i^2+2\bar{e}_{i}\alpha_i+\bar{e}_{i}^2)
%	-\frac{1}{2\dot\sigma_{\alpha}^4} \sumig(\alpha_i^2-\dot\sigma_{\alpha}^2)\\&
	= \frac{1}{2}\sumig(\dot\tau_i^2 - 1/\dot\sigma_{\alpha}^4)(\alpha_i^2-\dot\sigma_{\alpha}^2) +\frac{1}{2}\sumig\dot\tau_i^2 (2\bar{e}_{i}\alpha_i+\bar{e}_{i}^2 - \dot{\sigma}_{e}^2/m_i).
	\end{split}
	\end{equation*}
From (\ref{bounds}) and (\ref{moments}), the variances of the sums are $O(m_L^{-2}g)$ and $O(m_L^{-1}g)$ so $l_{\siga^2}(\dot\bomega) - \xi_{\siga^2} = o_p(g^{1/2})$.
%	Twice the first sum has mean zero and
%	\begin{displaymath}
%	\begin{split}
%	\var  \left[\sumig\left\{\frac{{m_i}^2}{(\dot{\sigma}_{e}^2 + m_i\dot{\sigma}_{\alpha}^2)^2} - \frac{1}{\dot\sigma_{\alpha}^4}\right\}(\alpha_i^2-\dot\sigma_{\alpha}^2) \right]=\sumig \frac{ (\dot\sigma_e^4+2 m_i\dot\sigma_{\alpha}^2\dot\sigma_e^2)^2     }{\dot\sigma_{\alpha}^8(\dot{\sigma}_{e}^2 + m_i\dot{\sigma}_{\alpha}^2)^4}(\E\alpha_1^4-\dot\sigma_{\alpha}^4)
%	=O(m_L^{-2}g),
%	\end{split}
%	\end{displaymath}
%	and, using the fact that  $\E(\bar{e}_{i}^4) =  m_i^{-2}3\dot\sigma_e^4 + m_i^{-3}\{\E e_{11}^4-3\dot\sigma_e^4\} \le m_i^{-2}3\dot\sigma_e^4 + m_i^{-3}\E e_{11}^4$ \citep[p 345]{cramer1946mathematical},  twice the second sum has mean zero and
%	\begin{align*}
%	\var  \Big\{\sumig\frac{m_i^2}{(\dot{\sigma}_{e}^2 + m_i\dot{\sigma}_{\alpha}^2)^2} & (2\bar{e}_{i}\alpha_i+\bar{e}_{i}^2 - \dot{\sigma}_{e}^2/m_i)\Big\}
%%	&=\sumig \frac{ m_i^4}{(\dot{\sigma}_{e}^2 + m_i\dot{\sigma}_{\alpha}^2)^4}\E\{4\bar{e}_{i}^2\alpha_i^2+(\bar{e}_{i}^2 - \dot{\sigma}_{e}^2/m_i)^2 + 4\bar{e}_{i}\alpha_i(\bar{e}_{i}^2 - \dot{\sigma}_{e}^2/m_i)\}\\
%	\le \sumig \frac{ m_i^4}{(\dot{\sigma}_{e}^2 + m_i\dot{\sigma}_{\alpha}^2)^4}\{m_i^{-1}4\dot{\sigma}_{e}^2\dot{\sigma}_{\alpha}^2 +m_i^{-2}2 \dot\sigma_e^4 +m_i^{-3}\E e_{11}^4)=O(m_L^{-1}g),
%	\end{align*}
%	so $l_{\siga^2}(\dot\bomega) - \xi_{\siga^2} = O_p(m_L^{-1/2}g^{1/2})$.
	
	Next, we can write $\bs_{w}^{xy} = \bs_{w}^x\dot{\bbeta}_{2}+\sumig\sum_{j=1}^{m_i}(\sx_{ij}^{(w)}-\bar{\sx}_{i}^{(w)})e_{ij}$
%	\begin{align}\label{swxy}
%	\bs_{w}^{xy} &= \bs_{w}^x\dot{\bbeta}_{2}+\sumig\sum_{j=1}^{m_i}(\sx_{ij}^{(w)}-\bar{\sx}_{i}^{(w)})e_{ij}
%	\end{align}
	so, for the within slope parameter, 
	\begin{equation*}
	\begin{split}
\ssl_{\bbeta_{2}}(\dot\bomega) - \bxi_{\bbeta_{2}} %&=
%	\frac{1}{\dot\sigma_e^2}\bs_{w}^{xy}-\frac{1}{\dot\sigma_e^2}\bs_{w}^x\dot\bbeta_{2}+\sumig\frac{m_i\bar\sx_{i}^{(w)}}{{\dot\sigma}_{e}^2 + m_i{\dot\sigma}_{\alpha}^2}(\yibs-\dot\beta_{0}-\sx_{i}^{(b)T}\dot\bbeta_{1}-\sxbisw\dot\bbeta_{2})  - \frac{1}{\dot\sigma_e^2}\sumig\sum_{j=1}^{m_i}(\sx_{ij}^{(w)}-\bar{\sx}_{i}^{(w)})e_{ij}\\
&=
	\sumig \dot\tau_i \bar\sx_{i}^{(w)}(\alpha_i + \bar{e}_{i}) =o_p(n^{-1/2}),
	\end{split}
	\end{equation*}
because $g=o(n)$.
%Since the $k$th component of the difference $\ssl_{\bbeta_{2}}(\dot\bomega) - \bxi_{\bbeta_{2}}$ has variance
%	\begin{equation*}
%	\begin{split}
%\var\left\{
%	\sumig\frac{m_i\bar\sx_{ik}^{(w)}}{{\dot\sigma}_{e}^2 + m_i{\dot\sigma}_{\alpha}^2}(\alpha_i + \bar{e}_{i})\right\}&=\sumig\frac{m_i^2\bar\sx_{ik}^{(w)2}}{({\dot\sigma}_{e}^2 + m_i{\dot\sigma}_{\alpha}^2)^2}(\dot\sigma_{\alpha}^2 + \dot\sigma_e^2/m_i) =O(g) +O(g/m_L)
%	\end{split}
%	\end{equation*}
%and $g=o(n)$, we have $\ssl_{\bbeta_{2}}(\dot\bomega) - \bxi_{\bbeta_{2}} = o_p(n^{1/2})$.
	For the within variance component, expanding $S_w^y$, we show that
	\begin{equation}\label{bbbb}
	\begin{split}
	&\swy-2\dot{\bbeta}_2^T\bs_w^{xy}+\dot{\bbeta}_2^T\bs_w^x\dot{\bbeta}_2=(n-g)\dot\sigma_e^2+\sumig\sum_{j=1}^{m_i}(e_{ij}^2-\dot\sigma_e^2)+o_p(n^{1/2}).
	\end{split}
	\end{equation}
	It then follows that
	\begin{equation*}
	\begin{split}
	l_{\sigma_e^2}(\dot\bomega)-\xi_{\sigma_e^2}
%	&=-\frac{1}{2}\sumig\frac{1}{\dot\sigma_{e}^2 + m_i\dot\sigma_{\alpha}^2}-\frac{n-g}{2\dot\sigma_e^2}+\frac{1}{2\dot\sigma_e^4}(S_{w}^y-2\bs_{w}^{xyT}\dot\bbeta_{2}+\dot\bbeta_{2}^T\bs_{w}^x\dot\bbeta_{2})\\&\qquad+\frac{1}{2}\sumig\frac{{m_i}}{(\dot\sigma_{e}^2 + m_i\dot\sigma_{\alpha}^2)^2}(\yib-\dot\beta_{0}-\sx_{i}^{(b)T}\dot\bbeta_{1}-\sxbisw\dot\bbeta_{2})^2   - \frac{1}{2\dot\sigma_e^4}\sumig\sum_{j=1}^{m_i}(e_{ij}^2-\dot\sigma_e^2)\\
%	& = -\frac{1}{2}\sumig\frac{1}{\dot\sigma_{e}^2 + m_i\dot\sigma_{\alpha}^2} +\frac{1}{2}\sumig\frac{{m_i}}{(\dot\sigma_{e}^2 + m_i\dot\sigma_{\alpha}^2)^2}(\alpha_i^2 + 2\alpha_i\bar{e}_{i} + \bar{e}_{i}^2)+ o_p(n^{1/2})\\
	& =\frac{1}{2}\sumig m_i^{-1}\dot\tau_i^2(\alpha_i^2 - \dot\sigma_{\alpha}^2+ 2\alpha_i\bar{e}_{i} + \bar{e}_{i} ^2 - \dot\sigma_e^2/m_i)+ o_p(n^{1/2}).
	\end{split}
	\end{equation*}
	Since $\E(\alpha_i^2 - \dot\sigma_{\alpha}^2+ 2\alpha_i\bar{e}_{i} + \bar{e}_{i}^2 - \dot\sigma_e^2/m_i)^2 = \E(\alpha_1^2 - \dot\sigma_{\alpha}^2)^2 + m_i^{-1}4\dot\sigma_{\alpha}^2\dot\sigma_{e}^2 + \var(\bar{e}_{i}^2)$, we have $l_{\sigma_e^2}(\dot\bomega) - \xi_{\sigma_e^2} =o_p(n^{1/2})$,
	which completes the proof.  \end{proof}

\begin{lma}  \label{lem7}
	Suppose Condition A holds. Then,  there exists a finite constant $L$ such that
	\begin{equation*}
	\begin{split}
	& \underset{\bomega\in \mathcal{N}}{\sup}\var\{l_{\beta_{0}}(\bomega)-l_{\beta_{0}}(\dot\bomega)\} \le L, \quad
	\underset{\bomega\in \mathcal{N}}{\sup}\var\{l_{\beta_{1 k}}(\bomega)-l_{\beta_{1 k}}(\dot\bomega)\} \le L, \\&
	\underset{\bomega\in \mathcal{N}}{\sup}\var\{l_{\siga^2}(\bomega)-l_{\siga^2}(\dot\bomega)\} \le L, \quad  \underset{\bomega\in \mathcal{N}}{\sup}\var\{l_{\beta_{2r}}(\bomega) - l_{\beta_{2r}}(\dot\bomega)\} \le L,  \\& \underset{\bomega\in \mathcal{N}}{\sup}\var\{l_{\sige^2}(\bomega) - l_{\sige^2}(\dot\bomega)\}\le L, \,\,\,\, k = 1,\ldots, p_b, r=1,\ldots, p_w.
	\end{split}
	\end{equation*}
\end{lma}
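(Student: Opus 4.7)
The plan is to bound each of the five variances by a constant independent of $\bomega\in\mathcal{N}$, exploiting the decomposition $\bar y_i - \sz_i^T\bbeta = r_i + s_i$, where $\sz_i = [1,\sx_i^{(b)T},\bar\sx_i^{(w)T}]^T$, $r_i = \alpha_i+\bar e_i$ carries all the randomness, and $s_i = \sz_i^T(\dot\bbeta-\bbeta)$ is deterministic. On $\mathcal{N}$, the bounds in (\ref{bounds}) give $|\tau_i-\dot\tau_i|$ and $|\tau_i^2-\dot\tau_i^2|$ both $O(g^{-1/2})$, while using $(a+b+c)^2\le3(a^2+b^2+c^2)$, the Cauchy--Schwarz inequality and Condition A4 yields $\sum_{i=1}^g s_i^2 \le 3M^2\{1 + g^{-1}\sum_i|\sx_i^{(b)}|^2 + (g/n)\,g^{-1}\sum_i|\bar\sx_i^{(w)}|^2\} = O(1)$ (the factor $g/n\le m_L^{-1}$ even makes the last piece $o(1)$).

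For the between--cluster components I would write $l_{\beta_0}(\bomega) - l_{\beta_0}(\dot\bomega) = \sum_i\tau_i\sz_i^T(\dot\bbeta-\bbeta) + \sum_i(\tau_i-\dot\tau_i) r_i$; the first sum is deterministic and the second has variance $\sum_i(\tau_i-\dot\tau_i)^2/\dot\tau_i = O(g^{-1})\cdot O(g) = O(1)$ using $\var(r_i) = 1/\dot\tau_i$ from (\ref{moments}). The argument for each $l_{\beta_{1k}}$ is identical after inserting the factor $x_{ik}^{(b)}$ and invoking A4. For $l_{\siga^2}(\bomega) - l_{\siga^2}(\dot\bomega)$, expand $\tau_i^2(r_i+s_i)^2 - \dot\tau_i^2 r_i^2 = (\tau_i^2-\dot\tau_i^2)r_i^2 + 2\tau_i^2 r_i s_i + \tau_i^2 s_i^2$; the $\tau_i^2 s_i^2$ and $-\sum(\tau_i-\dot\tau_i)$ pieces are deterministic and contribute nothing, while the two random pieces have variances at most $\sum_i(\tau_i^2-\dot\tau_i^2)^2\var(r_i^2) = O(g^{-1})\cdot O(g) = O(1)$ and $4\sum_i\tau_i^4 s_i^2\var(r_i) = O(1)\cdot\sum_i s_i^2 = O(1)$, the fourth moment bound $\var(r_i^2) = O(1)$ coming from A3.

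For $\ssl_{\bbeta_2}(\bomega) - \ssl_{\bbeta_2}(\dot\bomega)$ the key identity $\bs_w^{xy}-\bs_w^x\dot\bbeta_2 = \sum_{i,j}(\sx_{ij}^{(w)}-\bar\sx_i^{(w)})e_{ij}$ converts the leading piece $(\sige^{-2}-\dsige^{-2})[\bs_w^{xy}-\bs_w^x\dot\bbeta_2]$ into a linear form in the $e_{ij}$'s with variance $O(n^{-1})\cdot\sum_{i,j}|\sx_{ij}^{(w)}-\bar\sx_i^{(w)}|^2 = O(n^{-1})\cdot O(n) = O(1)$, while the remaining random piece $\sum_i(\tau_i-\dot\tau_i)\bar\sx_i^{(w)}r_i$ is controlled as in the between case using $\sum_i|\bar\sx_i^{(w)}|^2 = O(g)$. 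For $l_{\sige^2}$ the change in the quadratic form $\swy - 2\bs_w^{xyT}\bbeta_2 + \bbeta_2^T\bs_w^x\bbeta_2$ splits into a deterministic part plus a linear form in the $e_{ij}$'s with coefficients of size $|\bbeta_2-\dot\bbeta_2| = O(n^{-1/2})$; after the $1/(2\sige^4)$ prefactor this contributes variance $O(1)$. The companion piece $(\sige^{-4}-\dsige^{-4})\sum_{i,j}(e_{ij}-\bar e_i)^2$ has variance $O(n^{-1})\cdot O(n) = O(1)$.

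Finally, the cluster sum $\tfrac{1}{2}\sum_i m_i^{-1}[\tau_i^2(r_i+s_i)^2 - \dot\tau_i^2 r_i^2]$ appearing in $l_{\sige^2}$ has exactly the structure of the corresponding sum in $l_{\siga^2}$ but is damped by the additional factor $m_i^{-1}\le m_L^{-1}$, so it contributes $o(1)$. Taking the maximum of the finitely many $O(1)$ bounds across components yields the uniform constant $L$. The only real obstacle is the bookkeeping: for each of the five components one must carefully track which $O(g^{-1/2})$ or $O(n^{-1/2})$ factor every random summand carries, so that the bound survives the subsequent sum over $g$ clusters or $n$ observations; no probabilistic tool beyond direct variance computation using the already--established bounds (\ref{bounds}), the moment identities (\ref{moments}), and Conditions A3--A4 is needed.
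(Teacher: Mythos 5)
Your proposal is correct and follows essentially the same route as the paper: decompose each component of $\bpsi(\bomega)-\bpsi(\dot\bomega)$ into a deterministic part plus a stochastic part in $\alpha_i+\bar e_i$ (or the $e_{ij}$), then bound the variance uniformly on $\mathcal{N}$ using the bounds in (\ref{bounds}), the moments in (\ref{moments}) and Condition A4. The paper only writes out the $\beta_0$ case and declares the rest ``similar,'' so your more detailed bookkeeping for the remaining components is consistent with, and fills in, what the authors omit.
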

\begin{proof}	
We write out $\bpsi(\bomega)- \bpsi(\dot\bomega)$, take the variance (which eliminates all no-stochastic terms) and then bound the variance uniformly on $\bomega \in \mathcal{N}$ using (\ref{bounds}) and (\ref{moments}). For example,we have
	\begin{equation*}
	\begin{split}
	l_{\beta_{0}}(\bomega)-l_{\beta_{0}}(\dot\bomega)
	&= \sumig \tau_i \sz_i^T(\dot{\bbeta}-\bbeta) % \\&
	+ \sumig (\tau_i- \dot\tau_i)(\alpha_i+\bar{e}_{i})
	\end{split}
	\end{equation*}
	so
	\begin{equation*}
	\begin{split}
	\var\{l_{\beta_{0}}(\bomega)-l_{\beta_{0}}(\dot\bomega)\}&= \sumig (\tau_i - \dot\tau_i)^2/\dot\tau_i \\&
	%
	%	=   \sumig m_i\frac{\{\dot\sigma_{e(s)}^2 -\sigma_{e(s)}^2 + m_i(\dot\sigma_{\alpha(s)}^2  - \sigma_{\alpha(s)}^2)\}^2}{(\sigma_{e(s)}^2 + m_i\sigma_{\alpha(s)}^2)^2(\dot\sigma_{e(s)}^2 + m_i\dot\sigma_{\alpha(s)}^2)}\\&
	%
	\le   2L_1^{-1}L_2^2M^2 \sumig (m_i^{-2}n^{-1}+ g^{-1}) \le 4L_1^{-3}M^2.
	\end{split}
	\end{equation*}
The argument for each of the remaining terms is similar \end{proof}

%%%%%%%%%%%%%%%%%%%%%%%%%%%%%%%%%%%%%%%%%%%%%%%%%%%%%%

\subsection{Lemmas for the derivative of $\bpsi$} \label{sec:part2}

A key part of the proof of Theorem 1 is using the mean value theorem to obtain a linear approximation for $\bpsi(\bomega)$.  We apply the mean value theorem to each (real) element of ${\bpsi}(\bomega)$ so we need to allow different arguments (i.e. values of $\bomega$) in each row of the derivative matrix.  Let $\bOmega$ be a $(p_b+p_w+3)\times (p_b+p_w+3)$ matrix and write $\nabla{\bpsi}(\bOmega)$ to mean that each row of the derivative $\nabla{\bpsi}$ is evaluated at the corresponding row of $\bOmega$.   %When the rows of $\bOmega$ all equal $\bomega^T$, we simplify the notation by replacing $\bOmega$ and its submatrices by $\bomega$.  (We discard the transpose because there is no ambiguity in doing so and the notation looks unnecessarily complicated when it is retained.)  Again discarding the transpose, we also use $\bomega$ as a generic symbol to represent any of the rows of $\bOmega$ when the specific choice of row is not important.
We also partition $\nabla{\bpsi}(\bOmega)$ into submatrices conformably with the between cluster and within cluster parameters.  Leting $ \bOmega^{(b)}$ contain the first $p_b+2$ rows and $ \bOmega^{(w)}$ the remaining $p_w+1$ rows of $\bOmega$, we can write
%
%Let $ \bOmega^{(b)}$ be a $(p_b+2)\times (p_b+p_w+3)$ matrix, let $ \bOmega^{(w)}$ be the $(p_w+1)\times (p_b+p_w+3)$ matrix and set $\bOmega$ to be the $(p_b+p_w+3)\times (p_b+p_w+3)$ matrix obtained by stacking $\bOmega^{(b)}$ above $\bOmega^{(w)}$.
%Then we write the derivative of ${\bpsi}(\bomega)$ with respect to $\bomega$ in terms of the derivatives of
%${\bpsibb}(\bomega)$ and ${\bpsiww}(\bomega)$ with respect to $\bomega^{(b)}$ and $\bomega^{(w)}$, so we have
\begin{displaymath}
\begin{split} 
\nabla{\bpsi}(\bOmega)=\left[ \begin{matrix}
\nabla{\bpsi^{(bb)}}({\bOmega}^{(b)}) &\nabla{\bpsi}^{(bw)}({\bOmega}^{(b)}) \\  \nabla{\bpsi}^{(wb)}({\bOmega}^{(w)}) & \nabla{\bpsi}^{(ww)}({\bOmega}^{(w)})
\end{matrix}\right]. %=\left[ \begin{matrix}
%\sumig\nabla\bpsibb_i\{\bomega_{(s)}\}\\
%\sumig\sumjmi\nabla\bpsiww_{ij}\{\bomega_{(s)}\}
%\end{matrix}\right].
\end{split}
\end{displaymath}
The arguments to $\nabla\bpsi^{(wb)}$ and $\nabla\bpsi^{(bw)}$ are potentially different but, when they are the same, these matrices are the transposes of each other.  When the rows of $\bOmega$ all equal $\bomega^T$, we simplify the notation by replacing $\bOmega$ and its submatrices by $\bomega$.  (We discard the transpose because there is no ambiguity in doing so and the notation looks unnecessarily complicated when it is retained.)  Again discarding the transpose, we also use $\bomega$ as a generic symbol to represent any of the rows of $\bOmega$ when the specific choice of row is not important.  

\begin{lma}   \label{lem9}
Suppose Condition A holds. Then, as $g, m_L \to\infty$,  $\left\|\bb_n - \bb\right\|=o(1)$, where $\bb_n=-\bk^{-1/2}\E\nabla\bpsi(\dot\bomega)\bk^{-1/2}$ and
\begin{equation}\label{bbbbb}
\bb=\left[ \begin{matrix}
1/\dsiga^2& \ssc_1^T/\dsiga^2&0&\bzero_{[1:p_w]}&0\\
\ssc_1/\dsiga^2& \bc_2/\dsiga^2&\bzero_{[p_b:1]}&\bzero_{[p_b:p_w]}&\bzero_{[p_b:1]}\\
0&\bzero_{[1:p_b]}& 1/(2\dsiga^4)&\bzero_{[1:p_w]}&0\\
\bzero_{[p_w:1]}&\bzero_{[p_w:p_b]}&\bzero_{[p_w:1]}&\bc_{3}/\dsige^2&\bzero_{[p_w:1]}\\
0&\bzero_{[1:p_b]}&0&\bzero_{[1:p_w]}& 1/(2\dsige^4)
\end{matrix}\right] .
\end{equation}
We have $\ba=\bb$ under normality, but not otherwise.
\end{lma}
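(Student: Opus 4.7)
\textit{Proof sketch.} The plan is to compute $-\E\nabla\bpsi(\dot\bomega)$ block by block from the explicit form of $\bpsi$ in (\ref{eqm}), normalise by $\bk^{-1/2}$ on both sides, take limits using Condition A and the uniform bounds (\ref{bounds})--(\ref{moments}), and finally verify that the entries of $\ba$ in (\ref{ggg}) reduce to those of $\bb$ under the Gaussian moment identities.

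First I would differentiate the five components of $\bpsi$ in (\ref{eqm}) with respect to each parameter group, using $\partial\tau_i/\partial\siga^2 = -\tau_i^2$ and $\partial\tau_i/\partial\sige^2 = -\tau_i^2/m_i$. Evaluating at $\dot\bomega$ via $\yib - \dbeta_0 - \sx_i^{(b)T}\dot\bbeta_1 - \bar{\sx}_{i}^{(w)T}\dot\bbeta_2 = \alpha_i + \bar{e}_{i}$ and $\yij - \yib = (\sxij^{(w)} - \bar{\sx}_{i}^{(w)})^T\dot\bbeta_2 + \eij - \bar{e}_{i}$, every summand linear in $\alpha_i$ or $\eij$ vanishes in expectation while the quadratic summands collapse to deterministic factors $\dot\tau_i^{-1}$, $\dsige^2$ or $m_i^{-1}$. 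This produces, for example, $-\E\partial l_{\beta_0}/\partial\beta_0 = \sumig \dot\tau_i$, $-\E\partial l_{\siga^2}/\partial\siga^2 = \frac{1}{2}\sumig \dot\tau_i^2$, and $-\E\partial\ssl_{\bbeta_2}/\partial\bbeta_2^T = \dsige^{-2}\bs_w^x$, and analogous closed forms for all remaining entries; the mean-zero off-diagonals such as $(\beta_0,\siga^2)$ and $(\siga^2,\bbeta_2)$ drop out automatically.

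Next, multiplying both sides by $\bk^{-1/2}$ contributes a factor $g^{-1}$ on the between-between block, $n^{-1}$ on the within-within block, and $(gn)^{-1/2}$ on the two cross blocks. The uniform bounds (\ref{bounds}) give $\dot\tau_i \to 1/\dsiga^2$ and $\dot\tau_i/m_i \to 0$ as $m_L \to \infty$, so $g^{-1}\sumig \dot\tau_i^k \to 1/\dsiga^{2k}$ for $k=1,2$; combined with the Cesaro-type limits in Condition A4, this yields the diagonal entries $1/\dsiga^2$, $\bc_2/\dsiga^2$, $1/(2\dsiga^4)$, $\bc_3/\dsige^2$ and $1/(2\dsige^4)$ of $\bb$. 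Cross-block entries carry either a $\bar{\sx}_{i}^{(w)}$ factor or an $m_i^{-1}$ factor: by Cauchy--Schwarz and $g^{-1}\sumig|\bar{\sx}_{i}^{(w)}|^2 = O(1)$ from Condition A4, $\sumig|\bar{\sx}_{i}^{(w)}| = O(g)$, so the $(gn)^{-1/2}$ normalisation forces those contributions into $O((g/n)^{1/2}) = o(1)$; the $m_i^{-1}$ entries are $O(m_L^{-3/2})$ by the same count since $n \ge g m_L$.

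The main obstacle is purely bookkeeping: the $(\siga^2,\siga^2)$ and $(\sige^2,\sige^2)$ entries require a near-cancellation between $\sumig\dot\tau_i^2$ and $\sumig\dot\tau_i^3 \cdot \dot\tau_i^{-1}$ type sums, and the $(\sige^2,\sige^2)$ entry in particular relies on the expansion (\ref{bbbb}) to extract the leading $(n-g)/\dsige^4$ contribution from the $\swy$ part of $l_{\sige^2}$. Finally, a direct comparison of (\ref{ggg}) with (\ref{bbbbb}) gives the normality claim: under Gaussianity $\E\alpha_1^3 = \E e_{11}^3 = 0$ kills the $(\beta_0,\siga^2)$ and $(\bbeta_1,\siga^2)$ entries of $\ba$, while $\E\alpha_1^4 = 3\dsiga^4$ reduces $(\E\alpha_1^4-\dsiga^4)/(4\dsiga^8)$ to $1/(2\dsiga^4)$, with the analogous reduction for the $(\sige^2,\sige^2)$ entry, so $\ba = \bb$ entry by entry.
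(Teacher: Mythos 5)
Your proposal is correct and follows essentially the same route as the paper: compute $\E\nabla\bpsi(\dot\bomega)$ entry by entry (the paper reads these off from the Appendix), normalise by $\bk^{-1/2}$, use the uniform convergence $\dot\tau_i \to 1/\dsiga^2$ together with the Cesaro limits of Condition A4 for the diagonal blocks, and kill the off-diagonal blocks via the $(gn)^{-1/2}$ normalisation combined with $g=o(n)$, exactly as you do. One minor quibble: the $(\sige^2,\sige^2)$ entry needs only the deterministic identity $\E(\swy-2\dot{\bbeta}_{2}^T\bs_{w}^{xy}+\dot{\bbeta}_{2}^T\bs_{w}^x\dot{\bbeta}_{2})=(n-g)\dsige^2$, not the stochastic expansion (\ref{bbbb}), which belongs to the proof of Lemma \ref{lem2}.
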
 
\begin{proof}
%Using (\ref{oedf1}), (\ref{oedf2}), (\ref{oedf3}), (\ref{oedf4}) and (\ref{oedf5}), 
From the expressions for the elements of $\E\nabla{\bpsi}(\bOmega)$ given in the Appendix, we have
\begin{equation} \label{bfin}
\begin{split}
&%\bb_{n}\\&=
\bb_n =\\&
\left[ \begin{matrix}
g^{-1}\sumig\dot\tau_i&g^{-1}\sumig\dot\tau_i \sx_i^{(b)T}&0&
\ssf^T
&0\\
g^{-1}\sumig \dot\tau_i \sx_i^{(b)}&g^{-1}\sumig \dot\tau_i\sx_i^{(b)}\sx_i^{(b)T}&\bzero_{[p_b:1]}&\bh&\bzero_{[p_b:1]}\\
0&\bzero_{[1:p_b]}&(2g)^{-1}\sumig \dot\tau_i^2 &\bzero_{[1:p_w]}&r\\
\ssf &\bh^T&\bzero_{[p_w:1]}&\bs_{w}^x/n\dsige^2+\bp&\bzero_{[p_w:1]}\\
0&\bzero_{[1:p_b]}& r &\bzero_{[1:p_w]}&q
\end{matrix}\right],
\end{split}
\end{equation}
where $\ssf=(gn)^{-1/2}\sumig \dot\tau_i \bar{\sx}_{i}^{(w)T}$, $\bh=(gn)^{-1/2}\sumig \dot\tau_i\sx_{i}^{(b)}\bar{\sx}_{i}^{(w)T}$, \\ 
$\bp=n^{-1}\sumig \dot\tau_i\bar{\sx}_{i}^{(w)}\sxbisw$ and $q=(2n)^{-1}\sumig m_i^{-2}\dot\tau_i^2+(n-g)/(2n\dsige^4)$ and $r=(4gn)^{-1/2}\sumig m_i^{-1}\dot\tau_i^2$.
It is straightforward to show from Conditions A3-A4 and the fact that $\dot\tau_i \rightarrow 1/\dot\sigma_{\alpha}^2$ uniformly in $1\le i \le g$ as $m_L \to \infty$ that  
\begin{displaymath}
\begin{split}
|g^{-1}\sumig \dot\tau_i\bar{x}_{ik}^{(b)} - c_{1k}/\dot\sigma_{\alpha}^2| %&=
%|g^{-1}\sumig\dot\tau_i\bar{x}_{ik}^{(b)}- g^{-1}\sumig \bar{x}_{ik}^{(b)}/\dsiga^2+g^{-1}\sumig \bar{x}_{ik}^{(b)}/\dsiga^2 - c_{1k}\dot\sigma_{\alpha}^2|\\&
&\le \max_{1\le i \le g}|\dot\tau_i-1/\dsiga^2|g^{-1}\sumig|\bar{x}_{ik}^{(b)}|\\\quad &+|g^{-1}\sumig\bar{x}_{ik}^{(w)} - c_{1k}|/\dsiga^2 =o(1).
\end{split}
\end{displaymath}
Similar arguments can be applied to establish the convergence of the terms the remaining terms in $-g^{-1}\E\nabla{\bpsi^{(bb)}}({\bOmega}^{(b)})$ and $-n^{-1}\E\nabla{\bpsi}^{(ww)}({\bOmega}^{(w)})$. Finally, similar arguments can be used to show that $(n/g)^{1/2}$ times the entries in the off-diagonal blocks $(ng)^{-1/2}\E\nabla{\bpsi}^{(bw)}({\bOmega}^{(b)})$ and $(ng)^{-1/2}\E \nabla{\bpsi}^{(wb)}({\bOmega}^{(w)})$ converge and then using the fact that $g=o(n)$ to show that the entries in the off-diagonal blocks converge to zero. 
%This implies that the elements of $(ng)^{-1/2}\nabla{\bpsi}^{(wb)}({\bOmega}^{(w)})$ also all converge to zero.  
 \end{proof}  

The convergence result for the expected derivative of the estimating equation that we require in order to handle $T_2(\bomega)$ is established in Lemma \ref{lem5}.   

\begin{lma}   \label{lem5}
	Suppose Condition A holds. Then, as $g, m_L \to\infty$, 
	\begin{equation*}\label{meqqp2}
 	\underset{\bomega \in \mathcal{N}}{\sup} \left\|\bk^{-1/2}
	\left\{\E\nabla\bpsi(\bOmega)-\E\nabla\bpsi(\dot\bomega)\right\}\bk^{-1/2}\right\|=o(1).
	\end{equation*}	
\end{lma}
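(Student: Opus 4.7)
\medskip\noindent\textit{Proof proposal.}
The plan is to decompose $\bk^{-1/2}\{\E\nabla\bpsi(\bOmega)-\E\nabla\bpsi(\dot\bomega)\}\bk^{-1/2}$ into its between--between, within--within and two off-diagonal blocks, and to bound each block uniformly on $\mathcal{N}$ by combining the elementary estimates in (\ref{bounds}) with Condition A4. On $\mathcal{N}=\{\bomega:|\bk^{1/2}(\bomega-\dot\bomega)|\le M\}$, every coordinate of $\bomega-\dot\bomega$ corresponding to a between parameter is $O(g^{-1/2})$ and every coordinate corresponding to a within parameter is $O(n^{-1/2})$; combined with the envelope $L_1\le\tau_i\le L_2$, these deliver the uniform pointwise Lipschitz estimates $|\tau_i-\dot\tau_i|=O(g^{-1/2})$, $|\tau_i^2-\dot\tau_i^2|=O(g^{-1/2})$ and $|\sigma_e^{-2}-\dot\sigma_e^{-2}|=O(g^{-1/2})$, uniformly in $1\le i\le g$ and $\bomega\in\mathcal{N}$. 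Since the matrix has fixed dimension $(p_b+p_w+3)\times(p_b+p_w+3)$, it suffices to bound each entry of each block.

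For the between block $g^{-1}\E\nabla\bpsi^{(bb)}(\bOmega^{(b)})$, the entries are weighted sums of $\tau_i$ or $\tau_i^2$ against $1$, $\sx_i^{(b)}$ or $\sx_i^{(b)}\sx_i^{(b)T}$, together with the quadratic correction $g^{-1}\sum_i\tau_i^2\{\sz_i^T(\dot\bbeta-\bbeta)\}^2$ arising from $\E(\bar y_i-\sz_i^T\bbeta)^2=\dot\tau_i^{-1}+\{\sz_i^T(\dot\bbeta-\bbeta)\}^2$. Writing each difference as a telescoping sum over its factors and invoking the Condition A4 bounds on $g^{-1}\sum_i|\sx_i^{(b)}|^{2+\delta}$ and on $g^{-1}\sum_i|\bar{\sx}_i^{(w)}|^2$, together with $|\dot\bbeta-\bbeta|^2=O(g^{-1})$ for the between components of $\bbeta$, I obtain a uniform $O(g^{-1/2})$ bound. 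For the within block $n^{-1}\E\nabla\bpsi^{(ww)}(\bOmega^{(w)})$, the leading $\bbeta_2$-diagonal contribution is $\sigma_e^{-2}\bs_w^x/n$, whose difference is at most $|\sigma_e^{-2}-\dot\sigma_e^{-2}|\cdot\|\bs_w^x/n\|=O(g^{-1/2})$ by Condition A4, while auxiliary summands such as $n^{-1}\sum_i\tau_i\bar{\sx}_i^{(w)}\bar{\sx}_i^{(w)T}$ are themselves $O(g/n)=o(1)$ by Cauchy--Schwarz applied to $g^{-1}\sum_i|\bar{\sx}_i^{(w)}|^2=O(1)$. The off-diagonal blocks normalised by $(gn)^{-1/2}$ factor as $(g/n)^{1/2}$ times a between-type average of the same structure as above, so the between-block argument combined with $g/n\to 0$ handles them.

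The main obstacle is making the scalar Lipschitz estimates effective when they multiply the unbounded covariate products $|\sz_i|^2$ or $|\sx_i^{(b)}|^2$: a bare second moment bound would only give $O(1)$ for the averages, whereas the $2+\delta$ moment assumption in Condition A4, used with H\"older's inequality, is precisely what converts the $O(g^{-1/2})$ scalar gain into a uniform entry-wise gain on $\mathcal{N}$ rather than merely a pointwise one. Once this uniform control is secured, summing the block-wise $O(g^{-1/2})+O(g/n)$ bounds in any fixed-dimensional matrix norm yields the claimed $o(1)$ conclusion.
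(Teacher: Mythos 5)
Your proposal is correct and takes essentially the same route as the paper: decompose into the $(bb)$, $(bw)/(wb)$ and $(ww)$ blocks with their respective normalisations $g^{-1}$, $(gn)^{-1/2}$ and $n^{-1}$, then bound each entry of these deterministic matrices directly using the uniform estimates in (\ref{bounds}) together with $|\sz_i^T(\dot{\bbeta}-\bbeta)|\le Mg^{-1/2}\{(1+|\sx_i^{(b)}|)+(g/n)^{1/2}|\bar{\sx}_{i}^{(w)}|\}$ and the covariate moment conditions of A4. The only quibble is one of emphasis: the $2+\delta$ covariate moments are not what makes this lemma work (second moments suffice, since the $O(g^{-1/2})$ scalar gains are uniform in $i$ and multiply averages that are $O(1)$ by Cauchy--Schwarz); the $2+\delta$ assumption is really there for the Lyapounov condition in Lemma \ref{lem3}.
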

\begin{proof}
It is enough to show the uniform convergence to zero of the elements of 
$g^{-1} \{\E\nabla\bpsi^{(bb)}(\bOmega^{(b)})-\E\nabla\bpsi^{(bb)}(\dot\bomega)\}$, 
 $g^{-1/2}n^{-1/2} \{\E\nabla\bpsi^{(bw)}(\bOmega^{(b)})-\E\nabla\bpsi^{(bw)}(\dot\bomega)\}$
%	+  \underset{\bomega \in \mathcal{N}}{\sup} $n^{-1/2}g^{-1/2}\left\|\E\nabla\bpsi^{(wb)}(\bOmega^{(w)})-\E\nabla\bpsi^{(wb)}(\dot\bomega)\right\|$
and $n^{-1}\{\E\nabla\bpsi^{(ww)}(\bOmega^{(w)})-\E\nabla\bpsi^{(ww)}(\dot\bomega)\}$.
These are all deterministic matrices so the result is obtained by directly bounding the components of these matrices. In addition to the bounds (\ref{bounds}), we also use the fact that, uniformly in $\bomega \in \mathcal{N}$, $|\sz_i^T(\dot{\bbeta}-\bbeta)|%\le M\{g^{-1/2}(1 + |\sx_i^{(b)}|) +n^{-1/2}|\bar{\sx}^{(w)}_{i}|\}
\le M g^{-1/2}\{(1 + |\sx_i^{(b)}|) +(g/n)^{1/2}|\bar{\sx}^{(w)}_{i}|\}$ to obtain bounds of the form
\[
|g^{-1}\sumig \tau_i^2\sz_i^T(\dot{\bbeta}-\bbeta)| \le L_2^{2}M g^{-1}\sumig g^{-1/2}\{(1 + |\sx_i^{(b)}|) +(g/n)^{1/2}|\bar{\sx}^{(w)}_{i}|\}= O(g^{-1/2}).
\]
%\[
%|  g^{-1/2}n^{-1/2}\sumig   \tau_i^2 \bar{\sx}_{i}^{(w)} \sz_i^T(\dot{\bbeta}-\bbeta)| %\\&
%\le  L_2^{2}  g^{-1/2}n^{-1/2}\sumig  |\bar{\sx}_{i}^{(w)}| g^{-1/2}\{(1+|\sx_{i}^{(b)}|)+(g/n)^{1/2}|\bar{\sx}_{i}^{(w)}|\} 
%=O(n^{-1/2}).
%\]
Combining these bounds, we can show that, uniformly in $\bomega \in \mathcal{N}$, $g^{-1} \{\E\nabla\bpsi^{(bb)}(\bOmega^{(b)})-\E\nabla\bpsi^{(bb)}(\dot\bomega)\} = O(g^{-1/2})$, $g^{-1/2}n^{-1/2} \{\E\nabla\bpsi^{(bw)}(\bOmega^{(b)})-\E\nabla\bpsi^{(bw)}(\dot\bomega)\}= O(n^{-1/2})$ and $n^{-1}\{\E\nabla\bpsi^{(ww)}(\bOmega^{(w)})-\E\nabla\bpsi^{(ww)}(\dot\bomega)\} = O(n^{-1/2})$ and the result follows. \end{proof}

The final result we require  in order to handle $T_3(\bomega)$  and complete the proof of Theorem 1 is given in Lemma \ref{lem8}.

\begin{lma}  \label{lem8}
	Suppose Condition A holds.    As $g, m_L\to\infty$,
	\begin{equation*}
	\begin{split}
	& \underset{\bomega\in\mathcal{N}}{\sup} g^{-1/4}\|\bkmh\{\nabla\bpsi(\bOmega)-\E\nabla\bpsi(\bOmega)\}\bkmh\| = o_p(1).
	\end{split}
	\end{equation*}	
\end{lma}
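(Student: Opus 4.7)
The plan is to mirror the covering-cube argument already used for $T_3(\bomega)$ in the proof of Theorem 1. Since the matrix $\bk^{-1/2}\{\nabla\bpsi(\bOmega)-\E\nabla\bpsi(\bOmega)\}\bk^{-1/2}$ has the fixed dimension $(p_b+p_w+3)\times(p_b+p_w+3)$, its operator norm is bounded up to a constant by the maximum absolute value of its entries, so it suffices to show that $g^{-1/4}$ times each normalised centred entry is $o_p(1)$ uniformly on $\mathcal{N}$.

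First I would partition the matrix into its between-between, cross and within-within blocks, normalised respectively by $g^{-1}$, $(gn)^{-1/2}$ and $n^{-1}$. Each centred normalised entry is a sum of independent zero-mean random variables built from polynomials (up to quadratic) in $\alpha_i$, $\bar{e}_i$ and $e_{ij}-\bar{e}_i$, with coefficients depending on $\bomega$ through $\tau_i(\bomega)$ and $\sigma_e^{-2}$. Following the template of Lemma \ref{lem7} and using Condition A3 together with the uniform bounds (\ref{bounds}) and (\ref{moments}), the variance of each centred normalised entry is bounded uniformly in $\bomega\in\mathcal{N}$ by $O(g^{-1})$ for the (bb) block and by $O(n^{-1})$ for each of the other blocks (deterministic entries have variance zero and can be ignored).

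Next, I would cover $\mathcal{N}$ by $N=O(g^{1/4})$ cubes $\mathcal{C}(\st_k)=\{\bomega:|\bkh(\bomega-\st_k)|\le Mg^{-1/4}\}$ and handle the maximum over grid centres by Chebychev's inequality and a union bound: for a typical (bb) entry,
\begin{equation*}
\pr\Bigl(g^{-1/4}\max_{1\le k\le N}|\text{entry at }\st_k|>\eta\Bigr)
\le \eta^{-2}g^{-1/2}N\cdot O(g^{-1})=O(\eta^{-2}g^{-5/4})=o(1),
\end{equation*}
with analogous calculations for the other blocks (using $g=o(n)$). For the within-cube fluctuation, I would apply the mean value theorem one further time to each entry $f_i(\bomega)$ of the centred normalised derivative, giving $\sup_{\bomega\in\mathcal{C}(\st_k)}|f_i(\bomega)-f_i(\st_k)|$ bounded by $Mg^{-1/4}$ multiplied by the supremum on $\mathcal{N}$ of the corresponding entry of $\bkmh\{\nabla^2\bpsi(\bOmega')-\E\nabla^2\bpsi(\bOmega')\}\bkmh$. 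The latter supremum is shown to be $O_p(1)$ by the same variance-plus-Chebychev reasoning applied at one level higher, using the cubic polynomial structure of the third-order derivatives of the log-likelihood. The within-cube contribution is then $O_p(g^{-1/4})$ which, multiplied by the outer $g^{-1/4}$, is $o_p(1)$.

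The main obstacle is the technical bookkeeping required to verify uniform variance bounds for all entries of $\nabla\bpsi$ and $\nabla^2\bpsi$ individually. The third-order derivatives bring in products up to cubic in the centred residuals $\alpha_i$, $\bar{e}_i$ and $e_{ij}-\bar{e}_i$; after expanding these and separating the zero-mean components, repeated use of the bounds (\ref{bounds}) and (\ref{moments}) and Condition A3 yields uniform bounds of the same orders as in the first-derivative case. The analysis is routine but lengthy, essentially replicating the calculations of Lemma \ref{lem7} with one extra polynomial degree.
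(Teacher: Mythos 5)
Your covering-cube strategy is not the paper's, and as written it has a genuine gap at the within-cube step. You reduce the within-cube fluctuation to $Mg^{-1/4}$ times $\sup_{\bomega\in\mathcal{N}}\|\bkmh\{\nabla^2\bpsi(\bOmega')-\E\nabla^2\bpsi(\bOmega')\}\bkmh\|$ and then assert that this supremum is $O_p(1)$ ``by the same variance-plus-Chebychev reasoning applied at one level higher.'' But Chebychev controls the deviation at a fixed point, or a finite maximum over grid points, not a supremum over the continuum $\mathcal{N}$; to control that supremum you would need either yet another covering argument (which pushes the problem onto the third derivative of $\bpsi$, and so on) or a direct bound that is uniform in $\bomega$. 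The second option is available --- but once you have it, the entire covering construction is redundant, because the same kind of direct uniform bound already works one level down, at the first derivative.

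That is exactly what the paper does, and it is much shorter because it exploits the $g^{-1/4}$ slack in the statement: it suffices to show that $\bkmh\{\nabla\bpsi(\bOmega)-\E\nabla\bpsi(\bOmega)\}\bkmh$ is $O_p(1)$ uniformly on $\mathcal{N}$, not $o_p(1)$. Each centred entry is bounded by the triangle inequality, with all $\bomega$-dependence absorbed into the uniform constants of (\ref{bounds}) (e.g.\ $\tau_i\le L_2$ and $|\sz_i^T(\dot\bbeta-\bbeta)|\le Mg^{-1/2}\{(1+|\sx_i^{(b)}|)+(g/n)^{1/2}|\bar{\sx}_i^{(w)}|\}$ on $\mathcal{N}$), leaving $\bomega$-free stochastic majorants such as
\begin{equation*}
\sumig(1+|\sx_i^{(b)}|)|\alpha_i+\bar{e}_i|=O_p(g)\qquad\mbox{and}\qquad \sumig|(\alpha_i+\bar{e}_i)^2-\dsiga^2-m_i^{-1}\dsige^2|=O_p(g),
\end{equation*}
whose orders follow from direct mean and variance calculations. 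For instance, $|l_{\siga^2\siga^2}(\bomega)-\E l_{\siga^2\siga^2}(\bomega)|$ is bounded uniformly on $\mathcal{N}$ by a fixed combination of these majorants and is therefore $O_p(g)$, so $g^{-5/4}$ times it is $o_p(1)$; the cross and within blocks are handled the same way with normalisations $g^{-3/4}n^{-1/2}$ and $g^{-1/4}n^{-1}$. If you want to keep your chaining argument you must replace the asserted second-derivative supremum bound with a genuine uniform bound of this triangle-inequality type (and compute the third-order derivatives of the log-likelihood, which the paper never needs); the simpler course is to drop the covering altogether and prove the crude uniform $O_p(1)$ bound directly.
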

\begin{proof}  
Arguing as in the proof of Lemma \ref{lem5}, it is enough to show the uniform convergence to zero of the elements of 
$g^{-5/4} \{\nabla\bpsi^{(bb)}(\bOmega^{(b)})-\E\nabla\bpsi^{(bb)}(\bOmega^{(b)})\}$, 
 $g^{-3/4}n^{-1/2} \{\nabla\bpsi^{(bw)}(\bOmega^{(b)})-\E\nabla\bpsi^{(bw)}(\bOmega^{(b)})\}$
%	+  \underset{\bomega \in \mathcal{N}}{\sup} $n^{-1/2}g^{-1/2}\left\|\E\nabla\bpsi^{(wb)}(\bOmega^{(w)})-\E\nabla\bpsi^{(wb)}(\dot\bomega)\right\|$
and $g^{-1/4}n^{-1}\{\nabla\bpsi^{(ww)}(\bOmega^{(w)})-\E\nabla\bpsi^{(ww)}(\bOmega^{(w)})\}$.
We use the bounds (\ref{bounds})  and the fact that,
by direct calculation of means and variances, we have
	\begin{equation*}
	\begin{split}
	&\sumig(1+|\sx_i^{(b)}|)|\alpha_i + \bar{e}_{i}| = O_p(g), \quad \sumig|\bar{\sx}_i^{(w)}||\alpha_i + \bar{e}_{i}| = O_p(g) \quad \mbox{and} \\&\sumig |(\alpha_i + \bar{e}_{i})^2 - \dsiga^2 - m_i^{-1}\dsige^2| = O_p(g).
	\end{split}
	\end{equation*}
For the derivatives with respect to the variance components, we have
%	
%	The non-zero elements in the  $(p_b+2) \times (p_b+2)$ matrix $\nabla{\bpsi}^{(bb)}({\bOmega}^{(b)})-\E \nabla{\bpsi}^{(bb)}({\bOmega}^{(b)})$ are of the form
%	\begin{equation*}
%	\begin{split}
%	|l_{\beta_{0}\siga^2}(\bomega) - \E l_{\beta_{0}\siga^2}(\bomega)| \le   L_2^{2}\sumig |\alpha_i + \bar{e}_{i}| =O_p(g),\\
%	%
%	| l_{\beta_{1k}\siga^2}(\bomega)  - \E l_{\beta_{1k}\siga^2}(\bomega)| \le L_2^{2}  \sumig |\sx_{ik}^{(b)}||\alpha_i + \bar{e}_{i}|=O_p(g),
%	\end{split}
%	\end{equation*}
%	%
	\begin{equation*} 
	\begin{split}
	|l_{\siga^2\siga^2}(\bomega) -& \E l_{\siga^2\siga^2}(\bomega)| \le  L_2^{3}\sumig  |(\alpha_i + \bar{e}_{i})^2 -  \dsiga^2 - m_i^{-1}\dsige^2| \\&
	+ 2L_2^{3}M  \sumig  \{g^{-1/2}(1+ |\sx_i^{(b)}|) + n^{-1/2}|\bar{\sx}_{i}^{(w)}|\}|\alpha_i + \bar{e}_{i}| =O_p(g),
	\end{split}
	\end{equation*}
%
%	The non-zero elements in the $(p_b+2) \times (p_w+1)$ matrix $\nabla{\bpsi}^{(bw)}({\bOmega}^{(b)})-\E \nabla{\bpsi}^{(bw)}({\bOmega}^{(b)})$ are 
	\begin{equation*}
	\begin{split}
%	|l_{\beta_{0}\sige^2}(\bomega) - \E l_{\beta_{0}\sige^2}(\bomega)|&\le m_L^{-1}L_2^{-2} \sumig |\alpha_i + \bar{e}_{i}| = O_p(m_L^{-1}g),\\
%	%
%	|l_{\beta_{1k}\sige^2}(\bomega) - \E l_{\beta_{1k}\sige^2}(\bomega)| & = m_L^{-1}L_1^{-2}\sumig |\sx_{ik}^{(b)}||\alpha_i + \bar{e}_{i}| = O_p(m_L^{-1}g),\\
%		%
%	|l_{\siga^2\beta_{2r}}(\bomega)- \E l_{\siga^2\beta_{2r}}(\bomega)| &\le L_2^{2}  \sumig |\bar{\sx}_{i}^{(w)}| |\alpha_i + \bar{e}_{i}| = O_p(g) \\
%	%
	|l_{\siga^2\sige^2}(\bomega)-& \E l_{\siga^2\sige^2}(\bomega)| \le m_L^{-1}L_2^{3}  \sumig |(\alpha_i + \bar{e}_{i})^2 -  \dsiga^2 - m_i^{-1}\dsige^2|  \\&
	+ m_L^{-1}2L_2^{3}M \sumig \{g^{-1/2}(1+|\sx_i^{(b)}|)+n^{-1/2}|\bar{\sx}_{i}^{(w)}|\}|\alpha_i + \bar{e}_{i}| = O_p(m_L^{-1}g).
	\end{split}
	\end{equation*}
%	
%	
%	For the first $p_w$ rows in the $(p_w+1) \times (p_w+1)$ matrix $\nabla{\bpsi}^{(ww)}({\bOmega}^{(w)})-\E \nabla{\bpsi}^{(w)}({\bOmega}^{(ww)})$, we have 
%	\begin{equation*}
%	\begin{split}
%	l_{\bbeta_{2k}\beta_{2r}}(\bomega) - \E l_{\beta_{2k}\beta_{2r}}(\bomega)& = 0,\\
%	%
%	|l_{\beta_{2k}\sige^2}(\bomega) - \E l_{\beta_{2k}\sige^2}(\bomega)| &= | \frac{1}{\sige^4}(S_{wk}^{xy}-\bs_{wk}^{xT}\dot\bbeta_{2}) + \sumig m_i^{-1}\tau_i^2 \bar{\sx}_{ik}^{(w)}(\alpha_i+\bar{e}_{i}) |\\
%	&\le | \frac{1}{\sige^4}\sum_{i=1}^g\sum_{j=1}^{m_i} (x_{ijk}^{(w)}-\bar{x}_{ik}^{(w)})e_{ij}| + m_L^{-1}L_2^{2}\sumig |\alpha_i+\bar{e}_{i} | =O_p(n);
%	\end{split}
%	\end{equation*}
	and
	\begin{equation*}
	\begin{split}
	|l_{\sige^2\sige^2}(\bomega) & - \E l_{\sige^2\sige^2}(\bomega)|  \le %\sige^{-6}|S_{w}^y-2\bs_{w}^{xy}\bbeta_{2}+\bbeta_{2}^T\bs_{w}^x\bbeta_{2} -\{\dot\bbeta_{2}-\bbeta_{2}\}^T\bs_{w}^x\{\dot\bbeta_{2}-\bbeta_{2}\} - (n-g)\dsige^2|  \\&+ m_L^{-2}L_2^{3} \sumig |(\alpha_i + \bar{e}_{i})^2 - \dsiga^2 - m_i^{-1}\dsige^2|\\&
%	+ 2 m_L^{-2}L_2^{3} \sumig\{g^{-1/2}(1+|\sx_i^{(b)}|) + n^{-1/2}|\bar{x}_{ik}^{(w)}|\}|\alpha_i + \bar{e}_{i}|\\&
%	= 
	\sige^{-6}|2(\dot{\bbeta}_{2}-\bbeta_2)^T\sumig\sum_{j=1}^{m_i}(\sx_{ij}^{(w)}-\bar{\sx}_{i}^{(w)})e_{ij}
	+\sumig\sum_{j=1}^{m_i}(e_{ij}^2-\dot\sigma_e^2) \nonumber\\&
	-\sumig({m_i}\bar{e}_{i}^2 - \dot\sigma_e^2)|  + m_L^{-2}L_2^{3} \sumig |(\alpha_i + \bar{e}_{i})^2 - \dsiga^2 - m_i^{-1}\dsige^2|\\&
	+ 2 m_L^{-2}L_2^{3} \sumig\{g^{-1/2}(1+|\sx_i^{(b)}|) + n^{-1/2}|\bar{x}_{ik}^{(w)}|\}|\alpha_i + \bar{e}_{i}| = O_p(n)
	\end{split}
	\end{equation*}
	because $g < n$ implies $m_L^{-2}g^{1/2} < m_L^{-2}g < n$. 
\end{proof}

\appendix

\section{Appendix: The derivative and expected derivative of $\bpsi$}

For the first row in $\nabla{\bpsi}^{(bb)}({\bOmega}^{(b)})$, we have 
\begin{equation*}\label{df1}
\begin{split}
&l_{\beta_{0}\beta_{0}}(\bomega)= - \sumig \tau_i,\quad
\ssl_{\beta_{0}\bbeta_{1}}(\bomega)^T=  - \sumig \tau_i\sx_i^{(b)T},\quad %\\&
l_{\beta_{0}\siga^2}(\bomega) =-  \sumig \tau_i^2 (\yib-\sz_i^T\bbeta);
\end{split}
\end{equation*}
for rows $k = 2,\ldots, p_b+1$ in $\nabla{\bpsi}^{(bb)}({\bOmega}^{(b)})$, we have 
\begin{equation*}\label{df2}
\begin{split}
&l_{\beta_{1k}\beta_{0}}(\bomega)=  - \sumig \tau_i\sx_{ik}^{(b)}, \quad \ssl_{\beta_{1k}\bbeta_{1}}(\bomega)^T=-  \sumig \tau_i \sx_{ik}^{(b)}\sx_{i}^{(b)T},  \\&
l_{\beta_{1k}\siga^2}(\bomega) = -  \sumig \tau_i^2\sx_{ik}^{(b)}(\yib-\sz_i^T\bbeta);  
\end{split}
\end{equation*}
and for the $(p_b+2)$th row in $\nabla{\bpsi}^{(bb)}({\bOmega}^{(b)})$, we have 
\begin{equation*}\label{df3}
\begin{split}
& l_{\siga^2\beta_{0}}(\bomega)=-  \sumig \tau_i^2(\yib-\sz_i^T\bbeta), 
\quad %\\&
\ssl_{\siga^2 \bbeta_{1}}(\bomega)^T= -  \sumig \tau_i^2\sx_{i}^{(b)T}(\yib-\sz_i^T\bbeta),\\&
 l_{\siga^2\siga^2}(\bomega)= \frac{1}{2} \sumig \tau_i^2-  \sumig  \tau_i^3(\yib-\sz_i^T\bbeta)^2.
\end{split}
\end{equation*}
The rows of $\nabla{\bpsi}^{(bw)}({\bOmega}^{(b)})$ are
\begin{equation*}\label{df12}
\begin{split}
&
\ssl_{\beta_{0}\bbeta_{2}}(\bomega)^T=-\sumig\tau_i\bar{\sx}_{i}^{(w)T},\quad %\\&
l_{\beta_{0}\sige^2}(\bomega)= -  \sumig m_i^{-1}\tau_i^2(\yib-\sz_i^T\bbeta);
\\&
\ssl_{\bbeta_{1k}\bbeta_{2}}(\bomega)^T=-\sumig\tau_i x_{ik}^{(b)}\bar{\sx}_{i}^{(w)T}, \quad %\qquad k=1,\ldots, p_b; \\&
l_{\beta_{1k}\sige^2}(\bomega) = -  \sumig m_i^{-1}\tau_i^2\sx_{ik}^{(b)}(\yib-\sz_i^T\bbeta);
\quad \\&
\ssl_{\siga^2\bbeta_{2}}(\bomega)^T= -  \sumig \tau_i^2\sxbisw(\yib-\sz_i^T\bbeta), \quad \\&
l_{\siga^2\sige^2}(\bomega)= \frac{1}{2} \sumig m_i^{-1}\tau_i^2-  \sumig m_i^{-1}\tau_i^3(\yib-\sz_i^T\bbeta)^2;
\end{split}
\end{equation*}
$ k=1,\ldots, p_b$, and, finally, the rows of $\nabla{\bpsi}^{(ww)}({\bOmega}^{(w)})$ are 
\begin{equation*}\label{df4}
\begin{split}
&\ssl_{\beta_{2k}\bbeta_{2}}(\bomega)^T= - \frac{1}{\sige^2}\bs_{wk}^{xT}-\sumig\tau_i\bar{x}_{ik}^{(w)}\sxbisw;
\\&
l_{\beta_{2k}\sige^2}(\bomega)= - \frac{1}{\sige^4}\{S_{wk}^{xy}-\bs_{wk}^{xT}\bbeta_{2}\}
-\sumig m_i^{-1} \tau_i^2 \bar{x}_{ik}^{(w)}(\yib-\sz_i^T\bbeta);\\
&
\ssl_{\sige^2\bbeta_{2}}(\bomega)^T= - \frac{1}{\sige^4}\{\bs_{w}^{xyT}-\bbeta_{2}^T\bs_{w}^x\}
-\sumig m_i^{-1} \tau_i^2\sxbisw (\yib-\sz_i^T\bbeta),\\\
& l_{\sige^2\sige^2}(\bomega)=  \frac{1}{2} \sumig m_i^{-2} \tau_i^2+\frac{n-g}{2\sige^4}-\frac{1}{\sige^6}(S_{w}^y-2\bbeta_{2}^T\bs_{w}^{xy}+\bbeta_{2}^T\bs_{w}^x\bbeta_{2})\\&\qquad\qquad
- \sumig m_i^{-2} \tau_i^3 (\yib-\sz_i^T\bbeta)^2,
\end{split}
\end{equation*}
$k = 1,\ldots, p_w$.  Here we have written $\bs_{wk}^{xT}$ for the $k$th row of $\bs_{w}^x$ so $\bs_{w}^x = [\bs_{w1}^{x},\ldots, \bs_{wp_w}^{x}]^T$ and $x_{ik}^{(b)}$, $\bar{x}_{ik}^{(w)} $and  $S_{wk}^{xy}$ for the $k$th element of $\sx_{i}^{(b)}$, $\bar{\sx}_{i}^{(w)}$ and  $\bs_{wk}^{xy} $, respectively, so $\sx_{i}^{(b)}=[x_{ik}^{(b)}]$, $\bar{\sx}_{ik}^{(w)}=[\bar{x}_{ik}^{(w)}]$ and  $\bs_{wk}^{xy} = [S_{wk}^{xy}]$.  When we need to address the elements of $\bs_w^x$, we write $\bs_w^x = [S_{wkr}^x]$.

We calculate the expected derivative matrix using $\E(\yib-\sz_i^T\bbeta)^2 =\left\{\sz_i^T(\dot{\bbeta}-\bbeta)\right\} ^2+\dot\tau_i^{-1}$, 
$\E (\bs_{w}^{xy}) = \bs_{w}^x\bbeta_{2}$ and
%\begin{equation*}\label{dabp3}
$\E (\swy-2\bbeta_{2}^T\bs_{w}^{xy}+\bbeta_{2}^T\bs_{w}^x\bbeta_{2})=(\dot{\bbeta}_{2}-\bbeta_{2})^T\bs_{w}^x(\dot{\bbeta}_{2}-\bbeta_{2})+(n-g)\dsige^2$.
%\end{equation*}
The first row of $\E\nabla {\bpsi}^{(bb)}({\bOmega}^{(b)})$ is
\begin{equation*}\label{edf1}
\begin{split}
&\E l_{\beta_{0}\beta_{0}}(\bomega)=- \sumig \tau_i,\quad
\E \ssl_{\beta_{0}\bbeta_{1}}(\bomega)^T= -  \sumig \tau_i \sx_i^{(b)T},\\&
\E l_{\beta_{0}\siga^2}(\bomega)= -  \sumig \tau_i^2\sz_i^T(\dot{\bbeta}-\bbeta);
\end{split}
\end{equation*}
rows $k=2,\ldots,p_b+1$ of $\E\nabla {\bpsi}^{(bb)}({\bOmega}^{(b)})$ are
\begin{equation*}\label{edf2}
\begin{split}
&\E l_{\beta_{1k}\beta_{0}}(\bomega) = -  \sumig \tau_i \sx_{ik}^{(b)}, \quad
\E \ssl_{\beta_{1k}\bbeta_{1}}(\bomega)^T= -  \sumig \tau_i \sx_{ik}^{(b)}\sx_i^{(b)T},\\&
\E l_{\beta_{1k}\siga^2}(\bomega)= -  \sumig \tau_i^2\sx_{ik}^{(b)} \sz_i^T(\dot{\bbeta}-\bbeta);
\end{split}
\end{equation*}
and the $(p_b+2)$th row of  $\E\nabla {\bpsi}^{(bb)}({\bOmega}^{(b)})$ is
\begin{equation*}\label{df31}
\begin{split}
&\E  l_{\siga^2\beta_{0}}(\bomega)=-  \sumig \tau_i^2\sz_i^T(\dot{\bbeta}-\bbeta),\quad% \\&
\E \ssl_{\siga^2 \bbeta_{1}}(\bomega)^T= -  \sumig \tau_i ^2\sx_{i}^{(b)T} \sz_i^T(\dot{\bbeta}-\bbeta),\\
&
\E l_{\siga^2\siga^2}(\bomega)= \frac{1}{2} \sumig \tau_i^2(1 -  2\dot\tau_i^{-1}\tau_i) %\\&\qquad\qquad\qquad\qquad
-  \sumig \tau_i^3\{\sz_i^T(\dot{\bbeta}-\bbeta)\}^2.
\end{split}
\end{equation*}	
The first $p_w$ columns of $\E\nabla {\bpsi}^{(bw)}({\bOmega}^{(b)})$ are 
\begin{equation*}\label{edf12}
\begin{split}
&
\E \ssl_{\beta_{0}\bbeta_{2}}(\bomega)^T=-\sumig \tau_i\bar{\sx}_{i}^{(w)T},\quad %\\&
\E \ssl_{\bbeta_{1k}\bbeta_{2}}(\bomega)^T=-\sumig \tau_i x_{ik}^{(b)}\bar{\sx}_{i}^{(w)T}; \\&
\E \ssl_{\siga^2\bbeta_{2}}(\bomega)^T= -  \sumig \tau_i^2\sxbisw \sz_i^T(\dot{\bbeta}-\bbeta),
\end{split}
\end{equation*}
$k=1,\ldots, p_b$.  The last column of $\E\nabla {\bpsi}^{(bw)}({\bOmega})$  is
\begin{equation*}\label{edf13}
\begin{split}
&
\E  l_{\beta_{0}\sige^2}(\bomega)=-  \sumig m_i^{-1}\tau_i^2\sz_i^T(\dot{\bbeta}-\bbeta);\\&
\E l_{\bbeta_{1k}\sige^2}(\bomega)= -  \sumig m_i^{-1}\tau_i^2 x_{ik}^{(b)}\sz_i^T(\dot{\bbeta}-\bbeta) ;\\&
\E l_{\siga^2\sige^2}(\bomega)= \frac{1}{2} \sumig m_i^{-1}\tau_i^2 (1 -  2\dot\tau_i^{-1}\tau_i) % \\&\qquad\qquad\qquad\qquad
-  \sumig m_i^{-1}\tau_i^3\left\{\sz_i^T(\dot{\bbeta}-\bbeta)\right\}^2,
\end{split}
\end{equation*}	
$k=1,\ldots,p_b$, and, finally, the rows of $\E\nabla{\bpsi}^{(ww)}({\bOmega})$ are 
\begin{equation*}\label{df41}
\begin{split}
&\E\ssl_{\beta_{2k}\bbeta_{2}}(\bomega)^T= -\frac{1}{\sige^2}\bs_{wk}^{xT}-\sumig\tau_i\bar{x}_{ik}^{(w)}\sxbisw,\,\,\,\, k = 1,\ldots, p_w;
\\&
\E l_{\beta_{2k}\sige^2}(\bomega)= - \frac{1}{\sige^4}\bs_{wk}^{xT}(\dot{\bbeta}_2-\bbeta_{2})-\sumig m_i^{-1}\tau_i^2\bar{x}_{ik}^{(w)}\sz_i^T(\dot{\bbeta}-\bbeta)
,\,\,\,\, k = 1,\ldots, p_w;
\\
&
\E\ssl_{\sige^2\bbeta_{2}}(\bomega)^T= - \frac{1}{\sige^4}(\dot{\bbeta}_{2}-\bbeta_{2})^T\bs_{w}^x
-\sumig m_i^{-1}\tau_i^2\bar{\sx}_{ik}^{(w)T}\sz_i^T(\dot{\bbeta}-\bbeta)
\\
& \E l_{\sige^2\sige^2}(\bomega) = \frac{1}{2}\sumig m_i^{-1}\tau_i^2(1-2 \dot\tau_i^{-1}\tau_i)+ \frac{n-g}{2\sige^4}\big(1  -2\frac{\dot\sigma_{e}^2}{\sige^2}\big)
\\&\qquad\qquad\qquad\qquad
 -\frac{1}{\sige^6}(\dot{\bbeta}_{2}-\bbeta_{2})^T\bs_{w}^x(\dot{\bbeta}_{2}-\bbeta_{2})
- \sumig m_i^{-2}\tau_i^3 \{\sz_i^T(\dot{\bbeta}-\bbeta) \}^2.
\end{split}
\end{equation*}

%%%%%%%%%%%%%%%%%%%%%%%%%%%%%%%%%%%%%%%%%%%%%%%%%%%%%

%\section*{References}
\bibliographystyle{plainnat}
\bibliography{LyuWelsh}

\end{document}